\newcommand{\R}{\mathbb{R}}
\newcommand{\N}{\mathbb{N}}
\newcommand{\Q}{\mathbb{Q}}
\newcommand{\wh}[1]{\widehat{#1}}
\newcommand{\mc}[1]{\mathcal{#1}}
\newcommand{\ovr}[1]{\overrightarrow{#1}}
\newcommand{\Int}[1]{\mathrm{int}\left(#1\right)}
\newcommand{\Costu}[1]{C_T#1}
\newcommand{\Cost}[1]{S^T_{x_0}#1}
\newcommand{\Endp}[1]{E^T_{x_0}#1}
\newcommand{\dEndp}[1]{d_{#1}E^T_{x_0}}
\newcommand{\Rank}[1]{\mathrm{rank}\,#1}
\newcommand{\Class}[1]{\mathrm{class}\,#1}
\newcommand{\Att}{A_{x_0}^T}
\newcommand{\IM}{\textrm{Im}\,}
\newcommand{\Flow}[2]{P_{#1}^{#2}} %comando per il flusso 
\newtheorem{thm}{Theorem}
\newtheorem{lemma}[thm]{Lemma}
\newtheorem{cor}[thm]{Corollary}
\newtheorem{prop}[thm]{Proposition}
\newtheorem*{propi}{Proposition}
\newtheorem*{lemmi}{Lemma}
\theoremstyle{definition}
\newtheorem{defi}[thm]{Definition}
\theoremstyle{definition}
\newtheorem*{main}{Main assumptions}
\theoremstyle{remark} 
\newtheorem*{remark}{Remark}
\newcommand{\be}{\begin{equation}}
\newcommand{\ee}{\end{equation}}
\numberwithin{equation}{section}
\author{Davide Barilari}
\address{Universit\'e Paris Diderot - Paris 7, Institut de Mathematique de Jussieu, UMR CNRS 7586 - UFR de Math\'ematiques.}
\email{\href{mailto:davide.barilari@imj-prg.fr}{\nolinkurl{davide.barilari@imj-prg.fr}}}
\author{Francesco Boarotto}
\date{\today}
\title[Regularity for affine optimal control problems]{On the set of points of smoothness for the value function of affine optimal control problems}
\address{CMAP, \'Ecole Polytechnique and Inria, Team GeCo, Palaiseau, France.}
\email{\href{mailto:francesco.boarotto@polytechnique.edu}{\nolinkurl{francesco.boarotto@polytechnique.edu}}}
\begin{document} 
\maketitle

\begin{abstract} We study the regularity properties of the value function %$\Cost$ 
associated with an affine optimal control problem with quadratic cost plus a potential, for a fixed final time %$T>0$ 
and initial point. % $x_{0}$. 
Without assuming any condition on singular minimizers, we prove that the value function is continuous on an open and dense subset of the interior of the attainable set. As a byproduct we obtain that it is actually smooth on a possibly smaller set, still open and dense.
\end{abstract}
\setcounter{tocdepth}{1}
	\tableofcontents

	\newcommand{\contr}{\Omega^{T}_{x_{0}}}
\section{Introduction}\label{sec:Intro} %Introduzione

The regularity of the value function associated with an optimal control problem is a classical topic of investigation in control theory and has been deeply studied in the last decades, extensively using tools from geometric control theory and non-smooth analysis. It is well-known that the value function associated with an optimal control problem fails to be everywhere differentiable and this is typically the case at those points where the uniqueness of minimizers is not guaranteed. Actually, it is not even continuous, in general, as soon as singular minimizers are allowed (see for instance \cite{AL09,Tre00}).

In this paper we investigate the regularity of the value function associated with affine optimal control problems, whose cost is written as a quadratic term plus a potential  

The key starting point of our work is the characterization of points where the value function is continuous. As we said, in presence of singular minimizers for the control problem one could not expect the value function to be continuous.
Indeed, for a fixed final time $T>0$ and initial point $x_{0}$, the continuity of the value function $\Cost$ at a point $x$ is strictly related with the openness of the end-point map on the optimal controls steering the initial fixed point $x_0$ to $x$ in time $T>0$. Here by end-point map, we mean the map that to every control $u$ associates the final point of the corresponding trajectory (cf. Section \ref{s:general} for precise definitions).

Without assuming any condition on singular minimizers, we focus on the set of points, that we call \emph{tame points}, in the interior of the attainable set such that the end-point map is open \emph{and} a submersion at every optimal control. The main result of this paper is that we can find a large set of tame points. Since tame points are points of continuity for the value function,  we deduce that $\Cost$ is continuous on an open and dense set of the interior of the attainable set.

Adapting then the arguments of \cite{agrachevsmooth,trelatrifford}, we prove that the value function is actually smooth on a (possibly smaller) open dense subset of the interior of the attainable set. 

The main novelty with respect to the known results, valid in the drift-less case and with zero potential, is that in the latter case the value function is everywhere continuous as a consequence of the openness of the end-point map, even in presence of deep singular minimizers. The absence of such a property for affine control system makes the study of the continuity of the value function more delicate in our context.

Let us introduce briefly the notation and present the main results more in details.

\subsection{Setting and main results} 	
Let $M$ be a smooth, connected $m$-dimensional manifold and let $T>0$ be a given \emph{fixed} final time. A smooth \emph{affine control system} is a dynamical system which can be written in the form:
	\be\label{eq:contrsyst0}
		\dot{x}(t)=X_0(x(t))+\sum_{i=1}^d u_i(t)X_i(x(t)),
	\ee 
	where $X_0,X_1,\dotso, X_d$ are smooth vector fields on $M$, and the map $t\mapsto u(t)=(u_1(t),\dotso,u_d(t))$ belongs to the Hilbert space $L^2([0,T],\R^d)$. 
	
Given $x_{0}\in M$ we define:
\begin{itemize}
\item[(i)] the set of \emph{admissible controls} $\contr$ as the subset of $u\in L^2([0,T],\R^d)$, such that the  solution $x_u(\cdot)$ to \eqref{eq:contrsyst0} satisfying $x_{u}(0)=x_{0}$ is defined on the interval $[0,T]$. If $u\in \contr$ we say that $x_u(\cdot)$ is an \emph{admissible trajectory}. By classical results of ODE theory, the set $\contr$ is open.
\item[(ii)] the \emph{attainable set} $\Att$ (from the point $x_{0}$, in time $T>0$), as the set of points of $M$ that can be reached from $x_{0}$ by admissible trajectories in time $T$, i.e.,
$$\Att=\{x_{u}(T) \mid u\in \Omega_{x_0}^{T}\}.$$ 
\end{itemize}	 
 For a given smooth function  $Q:M\to \R$, we are interested in those trajectories  minimizing the
% 
% Tonelli Lagrangian $L:M\times \R^k\to \R$
%	 \be\label{eq:TonLag}
%		 L(x,u)=\frac{1}{2}\left(\sum_{i=1}^k u_i^2-Q(x)\right),
%	 \ee
%The 
\emph{cost} given by:
	 \be\label{eq:Cost0}
	 C_T:\contr\to \R, \qquad 
	 \Costu{(u)}=%\int_0^T 				L(x_u(t),u(t))dt=
	 \frac{1}{2}\int_0^T\left(\sum_{i=1}^d u_i(t)^2-Q(x_u(t))\right)dt.
	 \ee
More precisely, given $x_0\in M$ and $T>0$, we are interested in the regularity properties of the \emph{value function} $\Cost{}:M\to\R$ defined as follows:
	 	\be\label{eq:ValueFunctI}
	 	\Cost{(x)}=\inf\left\{\Costu{(u)}\mid u\in \contr,\, x_{u}(T)=x\right\};
	 	\ee
	 	with the understanding that $\Cost{(x)}=+\infty$ if $x$ cannot be attained by admissible curves in time $T$.
We call \emph{optimal control} any control $u$ which solves the optimal control problem \eqref{eq:ValueFunctI}.
		
%The problem of describing trajectories minimizing the cost function steering $x_0$ to $x$ in time $T$.

\begin{main} For the rest of the paper we make the following assumptions:
\begin{itemize}
\item[(H1)] The \emph{weak H\"ormander condition} holds on $M$. Namely, we require for every point $x\in M$ the equality
	\be\label{eq:WeakHormander}
	\textrm{Lie}_x\left\{\left(\mathrm{ad}\,X_0\right)^jX_i\mid j\geq 0,\, i=1,\dotso,d\right\}=T_xM.
	\ee
where $(\mathrm{ad}\, X)Y=[X,Y]$, and $\textrm{Lie}_x\mathcal{F}\subset T_{x}M$ denotes the evaluation at the point $x$ of the Lie algebra generated by a family $\mathcal{F}$ of vector fields.	
\item[(H2)]	 For every bounded family $\mc{U}$ of admissible controls, there exists a compact subset $K_T\subset M$ such that $x_u(t)\in K_T$, for every $u\in\mc{U}$ and $t\in[0,T]$.
\item[(H3)] The potential $Q$ is a smooth function bounded from above.
\end{itemize}
\end{main}
The assumption (H1) is needed to guarantee that the attainable set has at least non-empty interior, i.e., $\Int{\Att}\neq \emptyset$ (cf.\ \cite[Ch. 3, Thm.\ 3]{jurdjevicbook}). 
 The second assumption (H2) is a completeness/compactness assumption on the dynamical system that, together with (H3), is needed to guarantee the existence of optimal controls. We stress that (H2) and (H3) are automatically satisfied when $M$ is compact. When $M$ is not compact, (H2) holds true under a sublinear growth condition on the vector fields $X_0,\dotso, X_d$. We refer to Section 2 for more details on the role of these assumptions.

As already anticipated, the key starting point of our work is the characterization of points where the value function is continuous through the study of the set of \emph{tame points}. This is the set $\Sigma_{t}\subset \Int{\Att}$ of all points $x$ such that the end-point map is open \emph{and} a submersion at every optimal control steering $x_0$ to $x$. The main result of this paper, whose proof comprises its technical core, is that we can find a large set of tame points.

\begin{thm}\label{t:main2} 
Fix $x_{0}\in M$ and let $\Cost{}$ be the value function associated with an optimal control problem of the form \eqref{eq:contrsyst0}-\eqref{eq:Cost0} satisfying assumptions \emph{(H1)-(H3)}. Then the set $\Sigma_t$ of tame points is open and dense in $\Int{\Att}$ and $\Cost{}$ is continuous on $\Sigma_{t}$.	
\end{thm}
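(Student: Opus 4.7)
I would split the statement into three pieces: (a) continuity of $\Cost{}$ at every tame point, (b) openness of $\Sigma_t$ inside $\Int{\Att}$, and (c) density of $\Sigma_t$ in $\Int{\Att}$. Items (a) and (b) should be soft consequences of the defining submersion property combined with the compactness hypotheses (H2)--(H3); the density (c) is the technical core of the paper and where I expect the main obstacle.

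\textbf{Continuity and openness.} For (a), the lower semicontinuity of $\Cost{}$ on $\Int{\Att}$ is a direct method argument: if $x_n\to x$ with $\Cost{(x_n)}$ uniformly bounded, then (H3) gives $\|u_n\|_{L^2}^2\leq 2\Costu{(u_n)}+T\sup Q$ for the associated optimal controls $u_n$, so the sequence is weakly relatively compact in $L^2$; by (H2) the trajectories lie in a fixed compact set and converge uniformly, and passing to a weak limit $u^*$ one gets $\Endp{(u^*)}=x$ together with $\Costu{(u^*)}\leq \liminf_n\Costu{(u_n)}$ thanks to weak lower semicontinuity of $\|\cdot\|_{L^2}^2$ and uniform continuity of the potential term. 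Upper semicontinuity at $x\in\Sigma_t$ is then immediate: for any optimal control $u$, surjectivity of $\dEndp{u}$ and the implicit function theorem produce a smooth local right inverse $x'\mapsto u(x')$ of $\Endp{}$, so $\Cost{(x')}\leq \Costu{(u(x'))}\to \Costu{(u)}=\Cost{(x)}$. For (b), given $x\in\Sigma_t$ and a sequence $x_n\to x$, I would take optimal $u_n$'s for $x_n$, extract a weak limit $u^*$ (optimal for $x$ by the above), invoke tameness of $x$ to conclude that $u^*$ is a submersion point, upgrade $u_n\rightharpoonup u^*$ to strong $L^2$ convergence via equality of the limits of $\|u_n\|_{L^2}^2$ forced by $\Costu{(u_n)}\to \Costu{(u^*)}$, and then use continuity of $u\mapsto \dEndp{u}$ to transfer surjectivity to $u_n$ for $n$ large. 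Running this argument for every accumulation point of every sequence of optimal controls for $x_n$ yields $x_n\in\Sigma_t$ eventually.

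\textbf{Density: the main obstacle.} Given $\bar x\in\Int{\Att}$ and an open neighborhood $V$ of $\bar x$, the goal is to produce a tame $x\in V$. My plan is to exploit the weak H\"ormander condition (H1), which forces $\Endp{}$ to be a submersion at a dense open subset of $\contr$, in order to construct near $\bar x$ a smooth finite--dimensional family of admissible controls whose end--point restriction covers $V$, and then apply a Sard--type transversality argument to find regular values of $\Endp{}$ inside $V$. The genuine difficulty — in my view the heart of the proof — is that tameness is a condition on \emph{every} optimal control steering $x_0$ to $x$: a single bad minimizer (in particular a strictly abnormal one) ruins it. I would therefore combine the Sard argument with a PMP--based stratification, showing that the set of targets in $V$ admitting an abnormal minimizer is meager (via transversality applied to $\Endp{}$ restricted to the stratified singular locus of $\contr$), and similarly that among normal minimizers the corank--one locus projects to a meager subset of $V$. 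A Baire category argument applied to the complement of these obstructions inside $V$ then yields a target $x\in V$ at which every optimal control is a submersion point, thereby proving density. The delicate step is controlling simultaneously all minimizers over the target variable, which presumably requires the compactness from (H2)--(H3) to prevent the set of minimizers from degenerating along the approximation.
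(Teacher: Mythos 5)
Your parts (a) and (b) are sound and essentially coincide with the paper's route: continuity at a tame point follows from lower semicontinuity (direct method under (H2)--(H3)) plus upper semicontinuity obtained from the local openness/right inverse of $\Endp{}$ at each optimal control, and openness of $\Sigma_t$ follows by taking limits of optimal controls for $x_n\to x$, upgrading weak to strong $L^2$ convergence through convergence of costs, and using lower semicontinuity of the rank of $\dEndp{u}$ (the paper phrases this via a limiting abnormal covector, which is equivalent).

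The density argument (c), however, contains a genuine gap, and it is precisely at the step you yourself flag as the heart of the matter. You propose to show that the set of targets in $V$ admitting an abnormal (or corank-one) minimizer is meager, ``via transversality applied to $\Endp{}$ restricted to the stratified singular locus of $\contr$.'' This cannot work as stated: $\contr$ is an open subset of the infinite-dimensional space $L^2([0,T],\R^d)$, where Sard's theorem fails (smooth maps into $\R^m$ can have critical values of positive measure), the singular locus of $\Endp{}$ is not a finite-codimension stratified set in any usable sense, and there are no parameters to perturb (the fields $X_0,\dotso,X_d$ are fixed), so parametric transversality is unavailable. In fact, smallness of the set of points reached by singular/abnormal minimizers is essentially a Sard-type conjecture that is open even in the sub-Riemannian case, and the paper deliberately avoids proving any such statement. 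Moreover, exhibiting regular values of $\Endp{}$ restricted to some finite-dimensional family of controls says nothing about the \emph{optimal} controls for those targets, which may be entirely different and singular. The paper's actual mechanism is different: it argues by contradiction on an open set $O$ where $\Class{(z)}\equiv k_O<m$ (using density of continuity points, compactness of $\mc{U}_x^{\min}$ at a continuity point, and lower semicontinuity of the rank to achieve local constancy of the class), then proves a uniform bound on normal final covectors of nearby minimizers (Proposition \ref{prop:constant}, via the estimate $|\pi_{\wh{Z}_v}(\wh{\xi}_v)|\leq\|v\|_{L^2}\,\|(\dEndp{v}\big|_{W_v})^{-1}\|$), so that the covectors of the (dense, by the proximal subdifferential argument) fair points in $O$ are trapped in finitely many compact $k_O$-dimensional balls in $T^*_{x_0}M$. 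Since fair points satisfy $z=\mc{E}_{x_0}^T(\Xi_z^u)$, the dense set $\Sigma_f\cap O$ lies in the image under the \emph{finite-dimensional} exponential map of a compact set of positive codimension, hence in a closed set of measure zero, contradicting $\mathrm{meas}(O)>0$. In short, the measure-theoretic argument must be run on the cotangent fibre through the exponential map, not on the control space through $\Endp{}$; without this (or an equivalent device) your density step does not go through.
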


%The continuity of the value function at a point $x$ can be related to  the openness properties of the end-point map on the optimal controls steering $x_0$ to $x$.% Recall that the end-point map is the map that to every control $u$ associates the final point of the corresponding trajectory (cf. Section \ref{s:general}).

In the drift-less case (more precisely, when $X_{0}=0$ and $Q=0$) the end-point map is open at every point, even if it is not a submersion in the presence of singular minimizers. This, however, suffices for the sub-Riemannian distance to be continuous everywhere. Moreover this remains true for any $L^p$-topology on the space of controls, for $p<+\infty$, see \cite{BL}.
This is no more true if we introduce a drift field, and the characterization of the set of points where the end-point is open and the choice of the topology in the space of controls is more delicate.

The proof of Theorem \ref{t:main2} is inspired by the arguments, dealing with the sub-Riemannian case, presented among others by the first author in \cite[Chapter 11]{nostrolibro}, and starts by characterizing the set of points reached by a unique minimizer trajectory that is not strictly singular (called \emph{fair} points). The classical argument proves that this set is dense in the attainable set but, while in the drift-less case each of these points is also a continuity point for the value function, in this setting in principle it could likely be that the set of fair points and the set of continuity points, both dense, may have empty intersection. 
%Hence, even though a continuity point is approached by a sequence of final points of ``well-parametrized'' trajectories, it is not possible a priori to describe conveniently the mode of this convergence. 
Completing this gap requires ad hoc new arguments developed in Section \ref{s:tamet}. 

Once Theorem \ref{t:main2} is proved, an adaptation of arguments from \cite{agrachevsmooth,trelatrifford} let us derive the following result.
\begin{thm}\label{t:main}
Under the assumptions of Theorem \ref{t:main2}, $\Cost{}$ is smooth on a non-empty open and dense subset of $\Int{\Att}$.
\end{thm}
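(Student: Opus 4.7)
My plan is to find an open and dense subset $\Sigma_s \subset \Sigma_t$ on which $\Cost{}$ is smooth. Combined with Theorem \ref{t:main2}, this yields the claim. On $\Sigma_t$, tameness means that the end-point map has full rank at every minimizer, so by the Pontryagin Maximum Principle every optimal control steering $x_0$ to a point $x\in \Sigma_t$ is the projection of a \emph{normal} extremal --- an integral curve on $T^*M$ of the Hamiltonian vector field $\vec H$ of
\[
H(\lambda)=\langle \lambda, X_0(\pi(\lambda))\rangle + \tfrac{1}{2}\sum_{i=1}^d \langle \lambda, X_i(\pi(\lambda))\rangle^2+\tfrac{1}{2}Q(\pi(\lambda)),
\]
with some initial covector $\lambda_0\in T^*_{x_0}M$. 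The smooth exponential map $\mathcal{E}: T^*_{x_0}M \to M$, $\mathcal{E}(\lambda_0):=\pi\circ e^{T\vec H}(\lambda_0)$, together with the smooth action functional $J$ evaluated along Hamiltonian trajectories, then provides the variational representation
\[
\Cost{(x)} = \inf\{\,J(\lambda_0) : \mathcal{E}(\lambda_0)=x\,\}\qquad\text{for every } x\in\Sigma_t.
\]

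I would then define $\Sigma_s\subset \Sigma_t$ as the set of those $x\in \Sigma_t$ for which (i) the above infimum is attained at a \emph{unique} $\lambda_0\in T^*_{x_0}M$, and (ii) $\mathcal{E}$ is a submersion at this $\lambda_0$. At any such $x$, the Implicit Function Theorem yields a smooth local inverse $\psi$ of $\mathcal{E}$ near $\lambda_0$. Continuity of $\Cost{}$ on $\Sigma_t$ from Theorem \ref{t:main2}, combined with a compactness argument for the set of minimizing initial covectors based on assumptions (H2)--(H3), shows that for $y$ close to $x\in \Sigma_s$ every minimizing covector at $y$ lies in the domain of $\psi$; locally one therefore has $\Cost{}=J\circ \psi$, which is smooth. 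The same persistence argument shows that $\Sigma_s$ is open in $\Sigma_t$.

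The delicate step is density of $\Sigma_s$ in $\Sigma_t$. Following the strategy of \cite{agrachevsmooth,trelatrifford}, one first establishes local semiconcavity of $\Cost{}$ on $\Sigma_t$, exploiting the fact that $\Cost{}$ is locally the infimum of a smooth family of action functionals $J$ parametrised by an open subset of $T^*_{x_0}M$. It is a classical fact that the set of points of differentiability of a semiconcave function is a dense $G_\delta$; and a duality argument between superdifferentials of $\Cost{}$ and minimizing initial covectors shows that at any point of differentiability condition (i) automatically holds. Condition (ii), i.e.\ non-degeneracy of $d\mathcal{E}$ at the minimizing covector, is then handled via Sard's theorem applied to the smooth map $\mathcal{E}$, together with the compactness of minimizing covectors used above, which rules out critical values on an open dense subset.

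The principal obstacle is to verify that the arguments of \cite{agrachevsmooth,trelatrifford}, originally developed for drift-less sub-Riemannian structures, carry over to the affine-quadratic setting with a potential. The essential ingredients required for the adaptation --- continuity of $\Cost{}$ on an open dense set, normality of all minimizers at tame points, and a smooth exponential map defined in a neighbourhood of the relevant initial covectors --- have all been supplied by Theorem \ref{t:main2}, so the remaining work is mainly a careful rewriting of the sub-Riemannian proofs with the Hamiltonian $H$ above in place of its drift-less counterpart, paying attention to the fact that under (H2)--(H3) the relevant sets of minimizing covectors remain bounded, which is what makes the localisation and compactness steps go through.
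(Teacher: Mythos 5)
Your definition of $\Sigma_s$ (unique minimizing initial covector plus non-criticality of the exponential map there), your openness argument by persistence of minimizing covectors, and your local formula $\Cost{}=J\circ\psi$ all match the paper's proof of Theorem \ref{thm:Smooth} almost step by step. The genuine gap is in the density step, precisely where you get the regularity that feeds the differentiability argument. You claim semiconcavity of $\Cost{}$ on $\Sigma_t$ because it is ``locally the infimum of a smooth family of action functionals parametrised by an open subset of $T^*_{x_0}M$''. But your representation is a \emph{constrained} infimum, $\Cost{(x)}=\inf\{J(\lambda_0)\mid \mc{E}_{x_0}^T(\lambda_0)=x\}$: to convert it into an infimum, over a fixed parameter set, of smooth functions of the endpoint $x$ (which is what the standard semiconcavity criterion requires) you must solve the constraint, i.e.\ invert $\mc{E}_{x_0}^T$ smoothly near each minimizing covector. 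At a tame point, however, the minimizing covector may perfectly well be a \emph{critical} point of $\mc{E}_{x_0}^T$ (a conjugate point); excluding this is exactly condition (ii) that you are trying to establish on a dense set, so the step as written is circular. Tameness only gives surjectivity of $\dEndp{u}$, the differential of the infinite-dimensional end-point map, not of the finite-dimensional exponential map.

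The repair is to run the construction through the end-point map rather than the exponential map: for a compact $K\subset\Sigma_t$, compactness of the set of optimal controls $\mc{M}_K$ (Lemma \ref{lemma:compactoncompact}) together with surjectivity of $\dEndp{u}$ at every minimizer yields smooth local right inverses of $\Endp{}$ with uniform bounds, hence local Lipschitz continuity of $\Cost{}$ on $K$ --- this is Proposition \ref{lemma:Lip}, and it is also how one would prove semiconcavity if one insisted on it; but Lipschitz regularity already suffices. Rademacher then gives differentiability almost everywhere, Corollary \ref{cor:differentiability} upgrades each differentiability point to a fair point with the unique covector $d_x\Cost{}$ (so your condition (i) holds there), and Sard applied to $\mc{E}_{x_0}^T$ removes only a measure-zero set of conjugate values. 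Note in this respect that Sard does not ``rule out critical values on an open dense subset'': it gives a set of critical values of measure zero, and density of $\Sigma_s$ follows by intersecting, inside a relatively compact subset of $\Sigma_t$, the full-measure set of differentiability points with the full-measure set of non-critical values; openness of $\Sigma_s$ is then exactly your persistence argument, which is the paper's step (i.b).
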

In \cite{agrachevsmooth}, the author proves the analogue of Theorem \ref{t:main} for the value function associated with sub-Riemannian optimal control problems, i.e., drift-less systems with zero potential. Notice that in this case (H1) reduces to the classical H\"ormander condition, and the value function (at time $T$) coincides with one half of the square of the sub-Riemannian distance (divided by $T$) associated with the family of vector fields $X_{1},\ldots,X_{d}$. 

Let us further mention that, even in the sub-Riemannian situation, it still remains an open question to establish whether the set of smoothness points of the value function has full measure in $\Int{\Att}$ or not.

\subsection{Further comments}
Regularity of the value function for these kinds of control system with techniques of geometric control has been also studied in \cite{CanRif08,Tre00}, where the authors assume that there are no abnormal optimal controls, a condition which yields the openness of the end-point already at the first order, while in \cite%[Lemma 5.10]
{AL09} the authors obtain the openness of the end-point map on optimal controls with second-order techniques, assuming no optimal Goh abnormal controls exist. 

For more details on Goh abnormals we refer the reader to \cite[Chapter 20]{agrachevbook} (see also \cite{nostrolibro,rifbook}). Let us mention that in \cite{CJT} the authors prove that the system \eqref{eq:contrsyst0} admits no Goh optimal trajectories for the generic choice of the $(d+1)$-tuple $X_0,\dotso, X_d$ (in the Whitney topology). Finally in \cite{prandi} the author proves the H\"older continuity of the value function under a strong bracket generating assumption, when one considers the $L^{1}$ cost.

\smallskip
For different approaches investigating the regularity of the value function through techniques of non-smooth analysis, one can see for instance the monographs \cite{bardibook,Cla98,cannarsabook,frankowskanotes}.

\subsection{Structure of the paper}
In Section \ref{s:general} we recall some properties of the end-point map, the existence of minimizers in our setting and recall their characterization in terms of the Hamiltonian equation. Section \ref{s:regular} introduces different sets of points that are relevant in our analysis. Section \ref{s:tamet} is devoted to the study of tame points and the proof of Theorem \ref{t:main2}. In the last Section \ref{s:cons} we complete the proof of Theorem \ref{t:main}. Finally, in Appendix \ref{sec:appendix} we present for readers' convenience the proof of a few technical facts, adapted with minor modifications to our setting.

	\section{Preliminaries}\label{sec:Setting} % Qui idealmente ci andranno tutti i preliminari tecnici. Mi piace lo schema del lavoro con Luca sulla curvatura.
	\label{s:general}

For a fixed admissible control $u\in \Omega_{x_0}^T$, it is well-defined the family of diffeomorphisms $$\Flow{0,t}{u}:U_{x_{0}}\subset M\to M,\qquad t\in[0,T],$$ defined on some neighborhood $U_{x_{0}}$ of $x_{0}$ by $\Flow{0,t}{u}(y)=x_{u,y}(t)$, where $x_{u,y}(t)$ is the solution of the equation \eqref{eq:contrsyst0} with initial condition $x_{u,y}(0)=y$. It is a classical fact that this family is absolutely continuous with respect to $t$. Similarly, given $u\in \Omega_{x_0}^T$ it is possible to define the family of flow diffeomorphisms $\Flow{s,t}{u}:U_{x_{0}}\to M$ by solving \eqref{eq:contrsyst0} with initial condition $x_{u,y}(s)=y$; notice then that $\Flow{t,t}{u}=\textrm{Id}$, and that the following composition formulas hold true (at those points where all terms are defined):
	 \be\label{eq:CompForm}
		 \Flow{s,t}{u}\circ \Flow{r,s}{u}=\Flow{r,t}{u}\qquad\textrm{and}\qquad\left(\Flow{s,t}{u}\right)^{-1}=\Flow{t,s}{u}.
	 \ee 
Finally, we employ the notation $\left(\Flow{s,T}{u}\right)_*$ to refer to the push-forward map defined from $T_{x_u(s)}M$ to $T_{x_u(t)}M$: in particular if $X$ is any vector field on $M$, then the push-forward $\left(\Flow{s,t}{u}\right)_*X$ is defined by:
\be
	\left(\Flow{s,t}{u}\right)_*(X(y))=\left(\left(\Flow{s,t}{u}\right)_*X\right)(\Flow{s,t}{u}(y)).
\ee

\subsection{The end-point map}In what follows we fix a point $x_0\in M$ and a time $T>0$. 
	\begin{defi}[end-point map]
		The \emph{end-point map} at time $T$ is the map
		\be\label{eq:end-point}
			\Endp{}:\Omega_{x_0}^T \to M,\quad \Endp{(u)}=x_u(T),
		\ee
		where $x_u(\cdot)$ is the admissible trajectory driven by the control $u$.
	\end{defi}
	
	The end-point map is smooth on $\Omega_{x_{0}}^{T}\subset L^{2}([0,T],\R^{d})$. The computation of its Fr\'echet differential is classical and can be found for example in \cite{nostrolibro,rifbook,Tre00}:
	
	\begin{prop}
		The differential $\dEndp{u}:L^{2}([0,T],\R^{d})\to T_{x_{u}(T)}M$ of the end-point map at $u\in\Omega_{x_0}^T$ is given by the formula:
		\be\label{eq:DiffEndp}
			\dEndp{u}(v)=\int_0^T\sum_{i=1}^d v_i(s)\left(\Flow{s,T}{u}\right)_*X_i(x_u(s))ds.
		\ee
	\end{prop}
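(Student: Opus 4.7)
The plan is to compute the Gateaux derivative of $\Endp{}$ at $u$ in an arbitrary direction $v\in L^{2}([0,T],\R^{d})$ by differentiating the trajectory equation with respect to a perturbation parameter, and then to identify the result with the claimed formula via variation of parameters. Since $\contr$ is open in $L^{2}([0,T],\R^{d})$, the perturbed control $u+\varepsilon v$ belongs to $\contr$ for all sufficiently small $\varepsilon$, so the curve $\varepsilon\mapsto \Endp{(u+\varepsilon v)}=x_{u+\varepsilon v}(T)$ is well defined near $\varepsilon=0$, and smooth dependence on parameters for ODEs guarantees that it is smooth in $\varepsilon$. Once the Gateaux derivative is shown to be given by the linear continuous map on the right-hand side of \eqref{eq:DiffEndp}, the classical equivalence between Gateaux and Fr\'echet differentiability for smooth maps between Banach spaces concludes.

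The first step is to set $y(t):=\tfrac{d}{d\varepsilon}\big|_{\varepsilon=0} x_{u+\varepsilon v}(t)\in T_{x_{u}(t)}M$ and differentiate the equation
\[
\dot{x}_{u+\varepsilon v}(t)=X_{0}(x_{u+\varepsilon v}(t))+\sum_{i=1}^{d}(u_{i}(t)+\varepsilon v_{i}(t))X_{i}(x_{u+\varepsilon v}(t))
\]
with respect to $\varepsilon$ at $\varepsilon=0$, justified by standard ODE regularity with respect to parameters. This produces the linearized (inhomogeneous, non-autonomous) equation along $x_{u}(\cdot)$:
\[
\dot{y}(t)=\left(\frac{\partial X_{0}}{\partial x}+\sum_{i=1}^{d}u_{i}(t)\frac{\partial X_{i}}{\partial x}\right)\!\bigg|_{x_{u}(t)} y(t)+\sum_{i=1}^{d}v_{i}(t)\,X_{i}(x_{u}(t)),\qquad y(0)=0,
\]
and by definition $\dEndp{u}(v)=y(T)$.

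The second step is to solve this linear equation by the variation of constants formula. The fundamental matrix of the homogeneous part is exactly the differential of the flow, i.e., $\big(\Flow{0,t}{u}\big)_{\!*}$ acting on $T_{x_{0}}M\cong T_{x_{u}(0)}M$. Hence
\[
y(T)=\int_{0}^{T}\big(\Flow{0,T}{u}\big)_{\!*}\big(\Flow{0,s}{u}\big)^{-1}_{\!*}\sum_{i=1}^{d}v_{i}(s)X_{i}(x_{u}(s))\,ds,
\]
and the composition rule \eqref{eq:CompForm} gives $\big(\Flow{0,T}{u}\big)_{\!*}\big(\Flow{0,s}{u}\big)^{-1}_{\!*}=\big(\Flow{s,T}{u}\big)_{\!*}$. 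Substituting yields precisely \eqref{eq:DiffEndp}.

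The main technical point, which I expect to be the only real obstacle, is the justification of differentiation under the integral sign and of the interchange of the $\varepsilon$-derivative with the ODE flow in the $L^{2}$ setting; this is handled by Gronwall-type estimates showing that the difference quotient $(x_{u+\varepsilon v}(t)-x_{u}(t))/\varepsilon$ converges uniformly in $t\in[0,T]$ to the solution $y(t)$ of the linearized equation, using that $v\in L^{2}$, that the vector fields $X_{i}$ are smooth, and that the relevant trajectories remain in a fixed compact neighborhood of $x_{u}([0,T])$ for $|\varepsilon|$ small. The resulting formula is visibly linear and continuous in $v\in L^{2}([0,T],\R^{d})$, which upgrades Gateaux to Fr\'echet differentiability and completes the proof.
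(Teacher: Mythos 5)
Your proposal is correct: the paper itself omits the proof of this proposition (it simply cites the classical references), and your argument — linearizing the control-affine ODE along $x_u(\cdot)$, solving the resulting inhomogeneous linear equation by variation of constants, and identifying the fundamental solution with the push-forwards $\left(\Flow{s,T}{u}\right)_*$ via the composition formula \eqref{eq:CompForm} — is exactly the classical computation found in those references. The only point to phrase carefully is the final upgrade from Gateaux to Fr\'echet differentiability: either invoke that the Gateaux differential you computed exists for all controls near $u$ and depends continuously on $u$ (which your Gronwall estimates give), or simply note that smoothness of the end-point map is already known, so the Gateaux computation identifies its Fr\'echet differential.
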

	
	Let us consider a sequence of admissible controls $\{u_n\}_{n\in\N}$, which weakly converges to some element $u\in 
L^2([0,T],\R^d) $. Then the sequence $\{u_n\}_{n\in\N}$ is bounded in $L^{2}$ and, thanks to our assumption (H2), there exists a compact set $K_{T}$ such that $x_{u_n}(t)\in K_{T}$ for all $n\in \N$ and $t\in [0,T]$. 

	This yields that the family of trajectories $\{x_{u_n}(\cdot)\}_{n\in\N}$ is uniformly bounded, and from here it is a classical fact  to deduce that the weak limit $u$ is an admissible control, and that $x_u(\cdot)=\lim_{n\to\infty}x_{u_n}(\cdot)$ (in the uniform topology) is its associated trajectory (see for example \cite{trelatbook}).
	
		This proves that the end-point map $\Endp{}$ is \emph{weakly continuous}. Indeed, one can prove that the same holds true for its differential $\dEndp{u}$. More precisely:
	 if $\{u_n\}_{n\in\N}$ is a sequence of admissible controls which weakly converges in $L^2([0,T],\R^d)$ to $u$ (which is admissible by the previous discussion), we have both that  
	 \be\label{eq:Convergence}
		 \lim_{n\to\infty}\Endp{(u_n)}=\Endp{(u)}\quad\textrm{ and }
		\quad \lim_{n\to\infty}\dEndp{u_n}=\dEndp{u},
	 \ee  and the last convergence is in the (strong) operator norm (see \cite{Tre00}).
	\begin{remark}
		There are other possible assumptions to ensure that the weak limit of a sequence of admissible controls is again an admissible control; for example, as suggested in \cite{CanRif08}, one could ask a sublinear growth condition on the vector fields $X_0,\dotso, X_d$. In this case the uniform bound on the trajectories (equivalent to (H2)) follows as a consequence of the Gronwall inequality, and the observation that a weakly convergent sequence in $L^2$ is necessarily bounded.
	\end{remark}

		 \begin{defi}[Attainable set]\label{def:AttSet}
	 	For a fixed final time $T>0$, we denote by $\Att$ the image of the end-point map at time $T$, and we call it the \emph{attainable set} (from the point $x_{0}$). 
	 
	 \end{defi}
	 	 	In general the inclusion $\Att\subset M$ can be proper, that is the end-point map $\Endp{}$ may not be surjective on $M$; nevertheless, the weak H\"ormander condition \eqref{eq:WeakHormander} implies that for every initial point $x_0$ one has $\Int{\Att}\neq \emptyset$ \cite[Ch. 3, Thm.\ 3]{jurdjevicbook}. 
			
\subsection{Value function and optimal trajectories}\label{s:limit}
	 % Let $Q:M\to \R$ be a smooth function bounded from above. 	
	 Let $Q:M\to \R$ be a smooth function, which plays in what follows the role of a potential; if we introduce the Tonelli Lagrangian
	 \be\label{eq:TonLag}
		L:M\times \R^d\to \R, \qquad L(x,u)=\frac{1}{2}\left(\sum_{i=1}^d u_i^2-Q(x)\right),
	 \ee 
	 then the \emph{cost} $C_T:\Omega_{x_0}^T\to \R$ is written as:
	 \be\label{eq:Cost}
	 \Costu{(u)}=\int_0^T 				L(x_u(t),u(t))dt=\frac{1}{2}\int_0^T\left(\sum_{i=1}^d u_i(t)^2-Q(x_u(t))\right)dt.
	 \ee
 
	 The differential $d_u\Costu{}$ of the cost can be recovered similarly as for the differential of the end-point map, and is given, for every $v\in L^2([0,T],\R^d) $, by the formula
	 \be\label{eq:diffCost}
		 d_u\Costu{(v)}=\int_0^T\langle u(t),v(t)\rangle dt-\frac{1}{2}\int_0^TQ'(x_u(t))\left(\int_0^t\sum_{i=1}^{d}v_i(s)(\Flow{s,t}{u})_*X_i(x_u(s))ds\right)dt,
	 \ee
that is obtained by writing $x_u(t)=E_{x_0}^{t}(u)$ and applying \eqref{eq:DiffEndp}.
	 
	 \smallskip
	 Fix two points $x_0$ and $x$ in $M$. The problem of describing optimal trajectories steering $x_0$ to $x$ in time $T$ can be naturally reformulated in the following way: introducing the value function $\Cost{}:M\to\R$ via the position
	 \be\label{eq:ValueFunct}
	 	\Cost{(x)}:=\inf\left\{\Costu{(u)}\mid u\in \Omega_{x_0}^T \cap \left(\Endp\right)^{-1}{(x)}\right\},
	 	\ee
	 	with the agreement that $\Cost{(x)}=+\infty$ if the preimage $\left(\Endp{}\right)^{-1}(x)$ is empty,
	 	then, for any fixed $x\in M$, the \emph{optimal control problem} consists into looking for elements $u\in L^2([0,T],\R^d) $ realizing the infimum in \eqref{eq:ValueFunct}. Accordingly, from now on we will call \emph{optimal control} any admissible control $u$ which solves the optimal control problem.

	 In this paper we will aways concentrate on the case that the final point $x$ of an admissible trajectory belongs to the interior of the attainable set $\Att$. Indeed, it is a general fact that $\Int{\Att}$ is densely contained in $\Att$ \cite{agrachevbook,jurdjevicbook}, and the weak H\"ormander condition ensures that $\Int{\Att}$ is non-empty; moreover, for every point $x\in \Int{\Att}$, we trivially have that $\Cost{(x)}<+\infty$, since by definition there exists at least one admissible control $v$ steering $x_0$ to $x$.
	 
	Existence of minimizers under our main assumptions (H1)-(H3) follows from classical arguments. 
	\begin{prop}[Existence of minimizers]\label{prop:optimal}
	Let $x\in\Att$. Then there exists an optimal control $u\in \Omega_{x_0}^T$ satisfying:
	\be
	\Endp{(u)}=x,\quad\textrm{and}\quad\Costu{(u)}=\Cost{(x)}.
	\ee
\end{prop}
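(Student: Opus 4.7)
The plan is to apply the direct method of the calculus of variations, exploiting the weak continuity properties of the end-point map established earlier in this section.

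First, I would pick a minimizing sequence $\{u_n\}_{n\in\N}\subset \Omega_{x_0}^T$ such that $E_{x_0}^T(u_n)=x$ and $C_T(u_n)\to S_{x_0}^T(x)$. Note that (H3) gives an upper bound $Q\leq M$ on $M$, whence for any admissible control $v$ one has $C_T(v)\geq \tfrac{1}{2}\|v\|_{L^2}^2 - \tfrac{TM}{2}$; in particular $S_{x_0}^T(x)>-\infty$, so the minimizing sequence is well-defined, and applying the same inequality to $u_n$ yields
\be
\tfrac{1}{2}\|u_n\|_{L^2}^2 \leq C_T(u_n) + \tfrac{TM}{2},
\ee
so that $\{u_n\}$ is bounded in $L^2([0,T],\R^d)$.

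Next I would extract (up to subsequences) a weak limit $u_n\rightharpoonup u^*$ in $L^2$. By assumption (H2) applied to the bounded family $\{u_n\}$, the corresponding trajectories $x_{u_n}(\cdot)$ remain in a common compact set $K_T$; by the discussion preceding \eqref{eq:Convergence}, this forces $u^*\in\Omega_{x_0}^T$ and $x_{u_n}\to x_{u^*}$ uniformly on $[0,T]$. In particular $E_{x_0}^T(u^*)=\lim_n E_{x_0}^T(u_n)=x$, so $u^*$ is a competitor in the infimum defining $S_{x_0}^T(x)$.

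It remains to show $C_T(u^*)\leq \liminf_n C_T(u_n)$, i.e.\ that $C_T$ is weakly lower semicontinuous along $\{u_n\}$. The quadratic term $v\mapsto \tfrac{1}{2}\int_0^T \sum_i v_i(t)^2\,dt$ is convex and strongly continuous on $L^2$, hence weakly lower semicontinuous; thus $\tfrac{1}{2}\|u^*\|_{L^2}^2\leq \liminf_n \tfrac{1}{2}\|u_n\|_{L^2}^2$. For the potential term, the uniform convergence $x_{u_n}\to x_{u^*}$ on $[0,T]$ together with the continuity of $Q$ on the compact set $K_T$ gives $\int_0^T Q(x_{u_n}(t))\,dt\to \int_0^T Q(x_{u^*}(t))\,dt$. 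Summing, $C_T(u^*)\leq \liminf_n C_T(u_n)=S_{x_0}^T(x)$, and since $u^*$ is admissible with $E_{x_0}^T(u^*)=x$ we must have equality. I expect the only delicate point to be the passage from weak $L^2$ convergence of controls to strong (uniform) convergence of trajectories; this is precisely what assumption (H2) is tailored to guarantee, so the estimate is essentially packaged by the weak continuity statement in \eqref{eq:Convergence}.
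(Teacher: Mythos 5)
Your proof is correct and follows essentially the same route as the paper's: lower bound on the cost from (H3), a bounded minimizing sequence, weak $L^2$ compactness, weak continuity of the end-point map via (H2) to identify the limit as an admissible competitor reaching $x$, and weak lower semicontinuity of the $L^2$ norm plus uniform convergence of trajectories for the potential term. The only (cosmetic) issue is reusing the symbol $M$ for the upper bound of $Q$, which clashes with the manifold's name.
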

\begin{remark}
The assumptions (H2)-(H3) play a crucial role for the existence of optimal control. An equivalent approach could be to work directly inside a given compact set (see \cite{MemAMS}) or with $M$ itself a compact manifold. For some specific cases, as in the classical case of the harmonic oscillator, one is able to integrate directly Hamilton's equations (cf.\ Section \ref{s:ham}), and the existence of optimal trajectories could be proved with ad hoc arguments.
\end{remark}
As already pointed out in the introduction, one could not expect global continuity for the value function. Nevertheless, it is well-known that under our assumptions, we have the following.
	 \begin{prop}\label{prop:Semicontinuous}
	 	The map $\Cost:\Att\to \R$ is lower semicontinuous.
	 \end{prop}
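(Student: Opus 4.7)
The plan is the standard direct method of the calculus of variations, exploiting weak compactness in $L^2$ together with the weak continuity of $E_{x_0}^T$ discussed in \eqref{eq:Convergence}. Fix $x\in \Att$ and a sequence $x_n\to x$ in $\Att$. Setting $\ell:=\liminf_{n\to\infty}\Cost{(x_n)}$, I may assume, up to extracting a subsequence, that $\Cost{(x_n)}\to \ell$ and that $\ell<+\infty$ (otherwise there is nothing to prove). By Proposition \ref{prop:optimal}, for each $n$ I can choose an optimal control $u_n\in\contr$ with $\Endp{(u_n)}=x_n$ and $\Costu{(u_n)}=\Cost{(x_n)}$.

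The first step is to establish that $\{u_n\}_{n\in\N}$ is bounded in $L^2([0,T],\R^d)$. Since by (H3) the potential $Q$ is bounded from above by some constant $C_Q$, the expression \eqref{eq:Cost} yields
\be
\tfrac{1}{2}\|u_n\|_{L^2}^2 = \Costu{(u_n)} + \tfrac{1}{2}\int_0^T Q(x_{u_n}(t))\,dt \leq \Cost{(x_n)} + \tfrac{T C_Q}{2},
\ee
and the right-hand side stays bounded as $n\to\infty$. Extracting a further subsequence I may therefore assume $u_n \rightharpoonup u$ weakly in $L^2$. By the discussion leading to \eqref{eq:Convergence}, the weak limit $u$ is admissible, the trajectories $x_{u_n}(\cdot)$ converge uniformly to $x_u(\cdot)$ on $[0,T]$ and all stay in a common compact set $K_T\subset M$ provided by (H2), and $\Endp{(u_n)}\to \Endp{(u)}$; combined with $\Endp{(u_n)}=x_n\to x$, this gives $\Endp{(u)}=x$.

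It remains to pass to the liminf in the cost. By the weak lower semicontinuity of the $L^2$ norm squared, $\|u\|_{L^2}^2\leq \liminf_{n\to\infty}\|u_n\|_{L^2}^2$. Since $Q$ is continuous and the trajectories $x_{u_n}(\cdot)$ converge uniformly to $x_u(\cdot)$ inside the compact set $K_T$, dominated convergence yields
\be
\int_0^T Q(x_{u_n}(t))\,dt \longrightarrow \int_0^T Q(x_u(t))\,dt.
\ee
Combining these two facts with \eqref{eq:Cost} gives
\be
\Costu{(u)}\leq \liminf_{n\to\infty}\Costu{(u_n)} = \ell.
\ee
Since $u$ is admissible and steers $x_0$ to $x$ in time $T$, the definition \eqref{eq:ValueFunct} of the value function yields $\Cost{(x)}\leq \Costu{(u)}\leq \ell$, which is exactly the lower semicontinuity at $x$.

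The only delicate point is the uniform $L^2$ bound on $\{u_n\}$, which is where the sign convention in \eqref{eq:Cost} and assumption (H3) enter essentially; once this is in place, the rest is a routine application of weak compactness, the weak continuity of the end-point map from \eqref{eq:Convergence}, and lower semicontinuity of the $L^2$ norm.
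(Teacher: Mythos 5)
Your proof is correct and follows essentially the same route as the paper's own argument: a uniform $L^2$ bound from (H3), weak compactness, weak continuity of the end-point map with uniform convergence of trajectories via (H2), and weak lower semicontinuity of the $L^2$ norm. No gaps to report.
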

Proofs of Propositions \ref{prop:optimal} and \ref{prop:Semicontinuous} are classical and follows from standard arguments in the literature, hence their proof is omitted and left to the reader.
	 
\subsection{Lagrange multipliers rule}
	In this section we briefly recall the classical necessary condition satisfied by optimal controls $u$ realizing the infimum in \eqref{eq:ValueFunct}. It is indeed a restatement  of the classical Lagrange multipliers' rule (see \cite{agrachevbook,nostrolibro,pontryaginbook}).
	
	\begin{prop}\label{prop:Lagrange}
		Let $u\in L^2([0,T],\R^d) $ be an optimal control with $x=\Endp{(u)}$. Then at least one of the following statements is true:
		\begin{itemize}
			\item [a)] $\exists\, \lambda_T\in T^*_xM$ such that $\lambda_T \dEndp{u}=d_u\Costu{}$,
			\item [b)] $\exists\, \lambda_T \in T^*_xM$, with  $\lambda_T\neq 0$, such that $\lambda_T\dEndp{u}=0$.
		\end{itemize}	
	\end{prop}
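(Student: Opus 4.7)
The plan is to invoke the infinite-dimensional Lagrange multiplier rule by dichotomizing on whether or not the differential $\dEndp{u}:L^2([0,T],\R^d)\to T_xM$ is surjective.

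In the \emph{abnormal case}, $\dEndp{u}$ fails to be surjective, so that $\IM \dEndp{u}$ is a proper subspace of the finite-dimensional space $T_xM$. Picking any nonzero $\lambda_T\in T^*_xM$ in its annihilator immediately yields $\lambda_T\circ \dEndp{u}=0$, which is exactly conclusion (b).

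In the \emph{normal case}, $\dEndp{u}$ is surjective, and I would appeal to the implicit function theorem in the Banach setting. Because the target $T_xM$ is finite-dimensional, one can split $L^2([0,T],\R^d)=\ker \dEndp{u}\oplus N$, where $N$ is any $m$-dimensional subspace on which $\dEndp{u}$ restricts to a linear isomorphism onto $T_xM$; the decomposition is automatically topological since $N$ is finite-dimensional. The implicit function theorem then shows that, in a neighborhood of $u$, the fiber $(\Endp{})^{-1}(x)$ is a smooth Banach submanifold of $\Omega_{x_0}^T$ with tangent space $\ker \dEndp{u}$ at $u$. Since $u$ realizes the global minimum of $\Costu{}$ on this fiber, the restriction of $\Costu{}$ to the submanifold is critical at $u$, so $d_u\Costu{}$ vanishes on $\ker \dEndp{u}$. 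Because $\IM (\dEndp{u})^{*}\subset L^2([0,T],\R^d)^{*}$ is finite-dimensional and hence closed, standard duality identifies $(\ker \dEndp{u})^{\perp}$ with $\IM (\dEndp{u})^{*}$, and therefore $d_u\Costu{}=\lambda_T\circ \dEndp{u}$ for some $\lambda_T\in T^*_xM$, giving (a).

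No step is conceptually difficult; the one subtle point is the use of the implicit function theorem in infinite dimensions, which is legitimate precisely because the target $T_xM$ is finite-dimensional and surjections onto such targets automatically split. As an alternative that avoids the implicit function theorem, one could, for each $v\in\ker \dEndp{u}$, construct an explicit second-order correction $w_t=O(t^{2})$ in $L^{2}([0,T],\R^d)$ satisfying $E^T_{x_0}(u+tv+w_t)=x$; the optimality of $u$ would then force $d_u\Costu{(v)}=0$, after which the same duality argument supplies $\lambda_T$.
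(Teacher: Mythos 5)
Your proof is correct. Note that the paper does not actually prove Proposition \ref{prop:Lagrange}: it is stated as a restatement of the classical Lagrange multiplier rule, with the proof deferred to the references \cite{agrachevbook,nostrolibro,pontryaginbook}. Your route also differs slightly from the standard textbook argument found there, which works with the extended end-point map $F(v)=(\Costu{(v)},\Endp{(v)})\in\R\times M$: optimality of $u$ forces $F$ not to be locally open at $u$ (points of the form $(\Costu{(u)}-\varepsilon,x)$ are never attained near $u$), hence $d_uF$ cannot be surjective, and any nonzero annihilator $(\nu,\lambda_T)$ of $\IM d_uF$ yields (a) after normalizing $\nu$ when $\nu\neq 0$, and (b) when $\nu=0$. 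You instead dichotomize on the surjectivity of $\dEndp{u}$ itself: the non-surjective case gives (b) trivially, and in the surjective case you combine the complemented kernel (closed, of finite codimension in $L^2([0,T],\R^d)$, hence topologically complemented), the implicit function theorem exhibiting the fiber $(\Endp{})^{-1}(x)$ locally as a submanifold with tangent space $\ker\dEndp{u}$, and the closed-range duality $(\ker\dEndp{u})^{\perp}=\IM(\dEndp{u})^{*}$ (legitimate since $\IM(\dEndp{u})^{*}$ is finite-dimensional, hence closed) to produce $\lambda_T$ with $\lambda_T\dEndp{u}=d_u\Costu{}$. Both arguments are sound; the extended-map proof is shorter and bypasses the implicit function theorem, while yours separates the two alternatives cleanly and matches the way the paper later uses the dichotomy (abnormality being precisely non-surjectivity of $\dEndp{u}$). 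Your alternative sketch via a second-order correction $w_t=O(t^2)$ with $\Endp{(u+tv+w_t)}=x$ is also the standard way to avoid the IFT, and is fine as stated.
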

	Here $\lambda_T \dEndp{u}:L^{2}([0,T])\to \R$ denotes the composition of the linear maps $\dEndp{u}:L^{2}([0,T])\to T_{x}M$ and $\lambda_{T}:T_{x}M\to \R$.
	
	A control $u$, satisfying the necessary conditions for optimality stated in Proposition \ref{prop:Lagrange}, is said \emph{normal} in case (a) and \emph{abnormal} in case (b); moreover, directly from the definition we see that $\dEndp{u}$ is not surjective in the abnormal case. We stress again that the two possibilities are not mutually exclusive, and we define accordingly a control $u$ to be \emph{strictly normal} (resp. \emph{strictly abnormal}) if it is normal but not abnormal (resp. abnormal but not normal). Slightly abusing of the notation, we extend this language even to the associated optimal trajectories $t\mapsto x_u(t)$. 
	
\subsection{Normal extremals and exponential map} \label{s:ham}
	
	Let us denote by $\pi:T^*M\to M$ the canonical projection of the cotangent bundle, and by $\langle \lambda,v\rangle$ the duality pairing between a covector $\lambda\in T^*_xM$ and a vector $v\in T_xM$. In canonical coordinates $(p,x)$ on the cotangent space, we can express the Liouville form as $s=\sum_{i=1}^mp_idx_i$ and the standard symplectic form becomes $\sigma=ds=\sum_{i=1}^m dp_i\wedge dx_i$. We denote by $\ovr{h}$ the Hamiltonian vector field associated with a smooth function $h:T^*M\to \R$, defined by the identity:
	\be\label{eq:SymplLift}
		\ovr{h}=\sum_{i=1}^m\frac{\partial h}{\partial p_i}\frac{\partial}{\partial x_i}-\frac{\partial h}{\partial x_i}\frac{\partial}{\partial p_i}.
	\ee
	
	The Pontryagin Maximum Principle \cite{pontryaginbook,agrachevbook} tells us that candidate optimal trajectories are projections of extremals, which are integral curves of the constrained Hamiltonian system:
	\be
		\dot{x}(t)=\frac{\partial\mc{H}}{\partial p}(u(t),\nu,p(t),x(t)),\quad \dot{p}(t)=-\frac{\partial\mc{H}}{\partial x}(u(t),\nu,p(t),x(t)),\quad 0=\frac{\partial\mc{H}}{\partial u}(u(t),\nu,p(t),x(t)),
	\ee
	where the (control-dependent) Hamiltonian $\mc{H}:\R^d\times (-\infty,0]\times T^*M\to\R$, associated with the system \eqref{eq:contrsyst0}, is defined by:
	\be\label{eq:Hamiltonian}
	\mc{H}^\nu(u,\nu,p,x)=\langle 	p,X_0(x)\rangle+\sum_{i=1}^d u_i\langle p,X_i(x)\rangle+\frac{\nu}{2}\sum_{i=1}^d u_i^2-\frac{\nu}{2}Q(x).
	\ee	
	In particular, the non-positive real constant $\nu$ remains constant along extremals; recalling the result of Proposition \ref{prop:Lagrange}, there holds either the identity $(p(T),\nu)=(\lambda_T,0)$ in the case of abnormal extremals, or $(p(T),\nu)=(\lambda_T,-1)$ for the normal ones. Moreover, we see that under the previous normalizations, the optimal control $u(t)$ along normal extremals can be recovered using the equality:
	\be\label{eq:NormControl}
		u_i(t)=\langle p(t),X_i(x(t))\rangle,\qquad \textrm{for }i=1,\dotso,d.
	\ee
	Normal extremals are therefore solutions to the differential system:
	\be
		\dot{x}(t)=\frac{\partial H}{\partial p}(p(t),x(t)),\quad \dot{p}(t)=-\frac{\partial H}{\partial x}(p(t),x(t)),
	\ee 
	where the Hamiltonian $H$ has the expression:
	\be\label{eq:Hamiltonian2}
		H(p,x)=\langle p,X_0(x)\rangle+\frac{1}{2}\sum_{i=1}^d \langle p,X_i(x)\rangle^2+\frac{1}{2}Q(x).
	\ee
	In particular, being the solution to a smooth autonomous system of differential equations, the pair $(x(t),p(t))$ is smooth as well, which eventually implies that the control $u_i(t)=\langle p(t),X_i(x(t))\rangle$ associated to normal trajectories is itself smooth by \eqref{eq:NormControl}. It is well known that, under our assumptions, small pieces of normal trajectories are optimal among all the admissible curves that connect their end-points (see for instance \cite{agrachevbook}), that is, if $x_1=x_u(t_1)$ and $x_2=x_u(t_2)$ are sufficiently close points on the normal trajectory $x_u(\cdot)$, then the cost-minimizing admissible trajectory between $x_1$ and $x_{2}$ that solves \eqref{eq:ValueFunct} is precisely $x_u(\cdot)$.
	
	\begin{defi}[Exponential map]\label{def:ExponentialMap}
	The exponential map $\mc{E}$ with base point $x_0$ is defined as
	\be\label{eq:ExpMap}
		\mc{E}_{x_0}(\cdot,\cdot):[0,T]\times T_{x_0}^*M\to M,\quad \mc{E}_{x_0}(s,\lambda)=\pi(e^{s\ovr{H}}(\lambda)).
	\ee
	When the first argument is fixed, we employ the notation $\mc{E}_{x_0}^s:T_{x_0}^*M\to M$ to denote the exponential map with base point $x_0$ at time $s$; that is to say, we set $\mc{E}_{x_0}^s(\lambda):=\mc{E}_{x_0}(s,\lambda)$.
	\end{defi}
    Then we see that the exponential map parametrizes normal extremals; moreover, mimicking the classical notion in the Riemannian setting, it permits to define \emph{conjugate points} along these trajectories. 
    \begin{defi}
    We say that a point $x=\mc{E}_{x_0}(s,\lambda)$ is conjugate to $x_{0}$ along the normal extremal $t\mapsto \mc{E}_{x_0}(t,\lambda)$ if $(s,\lambda)$ is a critical point of $\mc{E}_{x_0}$, i.e. if the differential $d_{(s,\lambda)}\mc{E}_{x_0}$ is not surjective.
   \end{defi}

	\section{On the continuity}\label{sec:RegularValues} % In questa sezione vorrei mettere tutta la parte riguardante il corpo centrale del lavoro, ovvero mostrare che i valori regolari della mappa Endpoint sono densi nell'Attainable set
	\label{s:regular}
In this section we study fine properties of the value function on different subsets of $\Int{\Att}$. 
\subsection{Fair points} We start by introducing the set of fair points.
	\begin{defi}\label{def:FairP}
		A point $x\in\Int{\Att}$ is said to be a \emph{fair point} if there exists a unique optimal trajectory steering $x_0$ to $x$, which admits a normal lift. We call $\Sigma_f$ the set of all fair points contained in the attainable set.
	\end{defi}
	We stress that only the uniqueness of the optimal trajectory matters in the definition of a fair point; abnormal lifts are as well admitted for the moment.
	
	The lower semicontinuity of $\Cost{}$ permits to find a great abundance of fair points; their existence is related to the notion of proximal subdifferential (see for instance \cite{Cla98,trelatrifford} for more details).
	
	\begin{defi}\label{defi:ProxSubd}
		Let $F:\Int{\Att}\to\R$ be a lower semicontinuous function. For every $x\in\Int{\Att}$ we call the \emph{proximal subdifferential} at $x$ the subset of $T_x^*M$ defined by:
		\be\label{eq:ProxSubd}
		\partial_P F(x)=\left\{\lambda=d_x\phi\in T^*_xM\mid \phi\in C^\infty(\Int{\Att})\textrm{ and } F-\phi\textrm{ attains a local minimum at }x\right\}.
		\ee
	\end{defi}
	
	The proximal subdifferential is a convex subset of $T^*_xM$ which is often non-empty in the case of a lower semicontinuous function \cite[Theorem 3.1]{Cla98}.
	
	\begin{prop}\label{prop:NotEmpty}
		Let $F:\Int{\Att}\to\R$ be a lower semicontinuous function. Then the proximal subdifferential $\partial_PF(x)$ is not empty for a dense set of points $x\in\Int{\Att}$.
	\end{prop}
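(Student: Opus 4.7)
The plan is to follow the classical Clarke density argument for proximal subdifferentials: near any point of $\Int{\Att}$, produce a nearby point at which a suitably chosen quadratic test function witnesses a subgradient. Working in a local chart, fix an arbitrary $\bar x \in \Int{\Att}$ and a small closed ball $\overline{B}(\bar x, \delta) \subset \Int{\Att}$; by lower semicontinuity together with compactness, $F$ is bounded below on this ball.

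The key step is a quadratic penalization trick. For a parameter $C > 0$ to be chosen, consider
\[
g(y) := F(y) + C |y - \bar x|^2
\]
on $\overline{B}(\bar x, \delta)$. Lower semicontinuity on a compact set gives a minimizer $y^\ast \in \overline{B}(\bar x, \delta)$. Comparing $g(y^\ast) \leq g(\bar x) = F(\bar x)$ against the lower bound on $F$ forces $C|y^\ast - \bar x|^2 \leq F(\bar x) - \inf_{\overline{B}(\bar x, \delta)} F$, so choosing $C$ large enough that $C\delta^2$ strictly exceeds this difference pushes $y^\ast$ into the open ball $B(\bar x, \delta)$; that is, $y^\ast$ is an interior minimum of $g$.

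Setting $\phi(y) := -C|y - \bar x|^2$, the equality $F - \phi = g$ and the interior minimality of $y^\ast$ show that $F - \phi$ attains a local minimum at $y^\ast$ with $\phi$ smooth; this is exactly the defining property for $d_{y^\ast}\phi = -2C(y^\ast - \bar x) \in \partial_P F(y^\ast)$ in Definition \ref{defi:ProxSubd}. Since $\bar x$ and $\delta$ were arbitrary, this produces points of non-empty proximal subdifferential arbitrarily close to any prescribed point of $\Int{\Att}$, giving the density claim.

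The only subtlety to verify is that Definition \ref{defi:ProxSubd} transports cleanly to the manifold setting: local coordinate changes send smooth test functions to smooth test functions, so the non-emptiness of $\partial_P F(y^\ast)$ is coordinate-independent and the chart-based construction applies globally. I do not anticipate any serious obstacle, as the statement is purely variational and makes no use of the control-theoretic structure of $F = \Cost{}$; it is exactly the content of \cite[Theorem 3.1]{Cla98} already cited by the authors.
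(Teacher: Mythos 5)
Your proof is correct and is precisely the classical quadratic-penalization argument of \cite[Theorem 3.1]{Cla98}, which is exactly what the paper invokes (the authors do not reproduce a proof, they only cite Clarke). The only cosmetic point is that the chart-defined test function $\phi(y)=-C|y-\bar x|^2$ should be multiplied by a cutoff supported in the chart so that it literally lies in $C^\infty(\Int{\Att})$ as required by Definition \ref{defi:ProxSubd}; since local minimality is unaffected near $y^\ast$, this is routine and your argument stands.
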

	
	We showed in Proposition \ref{prop:Semicontinuous} that the value function $\Cost{}:\Int{\Att}\to \R$ is lower semicontinuous. By classical arguments, the proximal subdifferential machinery yields the following result (cf.\ also \cite{trelatrifford,agrachevsmooth}).
	
	\begin{prop}\label{thm:DensityFair}
		Let $x\in\Int{\Att}$ be such that $\partial_P\Cost{(x)}\neq \emptyset$. Then there exists a unique optimal trajectory $x_u(\cdot):[0,T]\to M$ steering $x_0$ to $x$, which admits a normal lift. In particular $x$ is a fair point.
	\end{prop}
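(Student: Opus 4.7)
The plan is to use the given proximal subgradient $\lambda \in \partial_P\Cost{(x)}$ to show, via a standard first-order variational argument, that \emph{every} optimal control $u$ steering $x_0$ to $x$ is a normal extremal with prescribed final covector $\lambda$; uniqueness of the optimal trajectory will then follow from uniqueness of integral curves of the normal Hamiltonian flow ending at the common datum $(x,\lambda)$.

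First I would pick $\phi \in C^\infty(\Int{\Att})$ realizing $\lambda = d_x\phi$ and such that $\Cost{}-\phi$ attains a local minimum at $x$, and let $u$ be any optimal control with $\Endp{(u)}=x$ (existence being guaranteed by Proposition \ref{prop:optimal}). Since $\Int{\Att}$ is open in $M$, there is a neighborhood $U \subset \Int{\Att}$ of $x$; by continuity of $\Endp{}$ on $L^2$, for every variation $v \in L^2([0,T],\R^d)$ one has $\Endp{(u+\epsilon v)} \in U$ for all sufficiently small $|\epsilon|$. Combining the trivial bound $\Costu{(u+\epsilon v)} \geq \Cost{(\Endp{(u+\epsilon v)})}$ with the proximal inequality $\Cost{(y)} - \phi(y) \geq \Cost{(x)} - \phi(x)$ valid for $y \in U$, and using $\Costu{(u)} = \Cost{(x)}$, one deduces
\[
\Costu{(u+\epsilon v)} - \Costu{(u)} \geq \phi(\Endp{(u+\epsilon v)}) - \phi(x).
\]
A first-order expansion in $\epsilon$ of both sides, together with the analogous inequality obtained by replacing $v$ with $-v$, forces $d_u\Costu{}(v) = \langle \lambda, \dEndp{u}(v)\rangle$ for every $v \in L^2([0,T],\R^d)$; this is exactly case (a) of Proposition \ref{prop:Lagrange} with $\lambda_T=\lambda$, so $u$ admits a normal lift whose final covector is $\lambda$.

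The uniqueness of the optimal trajectory is then immediate: if $u_1,u_2$ are both optimal at $x$, the argument above applies to each and forces the Hamiltonian lifts $(x_{u_i}(\cdot),p_{u_i}(\cdot))$ to be integral curves of $\ovr{H}$ (see \eqref{eq:Hamiltonian2}) sharing the terminal condition $(x,\lambda)$ at time $T$; backward uniqueness for the smooth Cauchy problem yields $x_{u_1}\equiv x_{u_2}$ and $p_{u_1}\equiv p_{u_2}$ on $[0,T]$, and hence $u_1\equiv u_2$ via the recovery formula \eqref{eq:NormControl}. I expect the delicate point to be the justification of the linearization: one must be sure that the perturbed endpoints lie in the open set $\Int{\Att}$ on which the proximal inequality is available. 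The openness of $\Int{\Att}$ in $M$ combined with the continuity of $\Endp{}$ delivers exactly this, and circumvents the alternative of smoothly extending $\phi$ to $M$ and invoking lower semicontinuity of $\Cost{}$, which would push the relevant inequality in the wrong direction.
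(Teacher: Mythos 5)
Your proposal is correct and follows essentially the same route as the paper: the paper packages your two-sided first-order variation as the statement that $\Phi(v)=\Costu{(v)}-\phi(\Endp{(v)})$ attains a local minimum at each optimal control $u$ (over a control neighborhood mapped into the set where the proximal inequality holds), giving $d_u\Costu{}=(d_x\phi)\dEndp{u}$, i.e.\ normality with final covector $\lambda$. The uniqueness step via backward uniqueness of the flow of $\ovr{H}$ from the common terminal datum $(x,\lambda)$ and the recovery formula \eqref{eq:NormControl} is likewise the paper's argument.
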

	
	\begin{proof} Fix any $\lambda \in \partial_P\Cost{(x)}$. Let us prove that every optimal trajectory steering $x_0$ to $x$ admits a normal lift having $\lambda$ as final covector.
	
		Indeed, if $\phi$ is a smooth function such that $\lambda=d_x\phi\in \partial_P\Cost{(x)}$, by definition the map
		\be\label{eq:Psi}
		\psi:\Int{\Att}\to \R,\quad \psi(y)=\Cost{(y)}-\phi(y)
		\ee
		has a local minimum at $x$, i.e. there exists an open neighborhood $O\subset \Int{\Att}$ of $x$ such that $\psi(y)\geq \psi(x)$ for every $y\in O$. Then, let $t\mapsto x_u(t)$, $t\in[0,T]$ be an optimal trajectory from $x_0$ to $x$, let $u$ be the associated optimal control, and define the smooth map:
		\be\label{eq:Phi}
		\Phi:\Omega_{x_0}^T\to \R,\quad \Phi(v)=\Costu{(v)}-\phi(\Endp{(v)}).
		\ee
		There exists a neighborhood $\mc{V}\subset \Omega_{x_0}^T$ of $u$ such that $\Endp{(\mc{V})}\subset O$, and since $\Costu{(v)}\geq \Cost{(\Endp{(v)})}$ we have the following chain of inequalities:
		\begin{align}
			\Phi(v)=\Costu{(v)}-\phi(\Endp{(v)})&\geq \Cost{(\Endp{(v)})}-\phi(\Endp{(v)})\\&\geq \Cost{(\Endp{(u)})}-\phi(\Endp{(u)})=\Costu{(u)}-\phi(\Endp{(u)})=\Phi(u),\quad \forall v\in\mc{V}.
		\end{align}
		
		Then:
		\be
			0=d_u\Phi=d_u\Costu{}-\left(d_{x}\phi\right)\dEndp{u},
		\ee
		and therefore we see that the curve $\lambda(t)=e^{(t-T)\vec H}(\lambda)$ is the desired normal lift of the trajectory $x_u(\cdot)$.
		
		In particular, since any two normal extremal lifts having $\lambda$ as common final point have to coincide, we see that there can only be one optimal trajectory between $x_0$ and $x$, which precisely means that $x\in \Sigma_f$ is a fair point.
	\end{proof}
\begin{remark}Notice that from the previous proof it follows that, when $\partial_P\Cost{(x)}\neq \emptyset$, then  the unique normal trajectory steering $x_{0}$ to $x$ is strictly normal if and only if  $\partial_P\Cost{(x)}$ is a singleton.
\end{remark}	
	\begin{cor}[Density of fair points]\label{cor:DensityFair}
		The set $\Sigma_f$ of fair points is \emph{dense} in $\Int{\Att}$.
	\end{cor}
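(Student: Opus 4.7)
The plan is to observe that this corollary is an immediate synthesis of three results already established in the excerpt. First, Proposition \ref{prop:Semicontinuous} tells us that the value function $\Cost{} : \Int{\Att} \to \R$ is lower semicontinuous. Second, Proposition \ref{prop:NotEmpty}, applied to $F = \Cost{}$, guarantees that the set
\[
D := \{ x \in \Int{\Att} \mid \partial_P \Cost{(x)} \neq \emptyset \}
\]
is dense in $\Int{\Att}$. Third, Proposition \ref{thm:DensityFair} shows that every point of $D$ is in fact a fair point, i.e.\ $D \subset \Sigma_f$.

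Combining these three ingredients, the chain $\Int{\Att} \supset \Sigma_f \supset D$, together with the density of $D$, yields immediately the density of $\Sigma_f$ in $\Int{\Att}$. There is no genuine obstacle here: the content of the statement has been fully absorbed into Proposition \ref{thm:DensityFair}, and the corollary is merely its combination with the general non-emptiness result for proximal subdifferentials of lower semicontinuous functions. The proof is therefore a one-line deduction, and I would present it as such, with no further calculation needed.
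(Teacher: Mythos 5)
Your proposal is correct and is exactly the argument the paper intends: the corollary follows immediately by combining the lower semicontinuity of $\Cost{}$ (Proposition \ref{prop:Semicontinuous}), the density of points with non-empty proximal subdifferential (Proposition \ref{prop:NotEmpty}), and the fact that each such point is fair (Proposition \ref{thm:DensityFair}). The paper leaves this chain implicit precisely because it is the one-line deduction you describe, so there is nothing to add.
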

In particular we have that all differentiability points of $\Cost{}$ are fair points.
	\begin{cor}\label{cor:differentiability}
		Suppose that $\Cost{}$ is differentiable at some point $x\in\Int{\Att}$. Then $x$ is a fair point, and its normal covector is $\lambda=d_x\Cost{}\in T^*_xM$.
	\end{cor}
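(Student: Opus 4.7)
The plan is to imitate the proof of Proposition \ref{thm:DensityFair}, replacing the local-minimum condition hidden in the definition of the proximal subdifferential by the weaker first-order information that Fréchet differentiability provides. A subtlety to flag at the outset: one cannot simply deduce $d_x\Cost{}\in\partial_P\Cost{(x)}$ from the differentiability of $\Cost{}$ at $x$ and invoke Proposition \ref{thm:DensityFair} as a black box; indeed, differentiability of a lower semicontinuous function does not in general produce a smooth function locally supporting it from below (consider, for instance, the one-dimensional model $F(y)=-|y|^{3/2}$ for $y\leq 0$ and $F(y)=0$ for $y\geq 0$ at the origin). Hence the argument must be redone directly.

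Concretely, pick any optimal control $u\in\Omega_{x_0}^T$ with $\Endp{(u)}=x$, whose existence is ensured by Proposition \ref{prop:optimal}, set $\lambda:=d_x\Cost{}\in T_x^*M$, and choose a smooth function $\phi$ defined near $x$ with $\phi(x)=\Cost{(x)}$ and $d_x\phi=\lambda$ (e.g.\ an affine function in local coordinates). Mimicking \eqref{eq:Phi}, consider the smooth functional $\Phi(v):=\Costu{(v)}-\phi(\Endp{(v)})$ on a neighborhood of $u$ in $\Omega_{x_0}^T$. From the definition of the value function one has $\Costu{(v)}\geq \Cost{(\Endp{(v)})}$, while Fréchet differentiability of $\Cost{}$ at $x$ combined with smoothness of $\phi$ yield $\Cost{(y)}-\phi(y)=o(|y-x|)$ as $y\to x$. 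Combining these two facts and using $|\Endp{(v)}-x|=O(\|v-u\|_{L^2})$, one obtains the infinitesimal inequality $\Phi(v)-\Phi(u)\geq o(\|v-u\|_{L^2})$. Testing it along $v=u+sw$ and letting $s\to 0^+$ (respectively $s\to 0^-$) gives $d_u\Phi(w)\geq 0$ (respectively $d_u\Phi(w)\leq 0$), hence $d_u\Phi=0$.

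This identity is precisely $d_u\Costu{}=\lambda\cdot \dEndp{u}$, so the curve $t\mapsto e^{(t-T)\ovr{H}}(\lambda)$ is a normal extremal lift of $x_u(\cdot)$ with terminal covector $\lambda$. Since normal extremals are integral curves of $\ovr{H}$ and are therefore uniquely determined by their terminal condition $(\lambda,x)$, every optimal trajectory ending at $x$ must coincide with the projection of this single extremal; in particular no strictly abnormal optimum can end at $x$, the optimal control is unique, and $x\in\Sigma_f$ with normal covector $d_x\Cost{}$. The one mild obstacle is exactly the subtle point flagged in the first paragraph: one should not pass through the proximal subdifferential, but instead replace the strict inequality $\Phi(v)\geq \Phi(u)$ used in Proposition \ref{thm:DensityFair} with the Landau-type bound $\Phi(v)-\Phi(u)\geq o(\|v-u\|_{L^2})$, which is still enough to force $d_u\Phi=0$ because $\Phi$ is smooth.
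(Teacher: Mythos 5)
Your proof is correct and follows essentially the same route as the paper: establish the first-order identity $d_u\Costu{}=\left(d_x\Cost{}\right)\dEndp{u}$ for every optimal control $u$ reaching $x$, and then deduce uniqueness exactly as in Proposition \ref{thm:DensityFair}, since all optimal trajectories then admit normal lifts with the same terminal covector $\lambda=d_x\Cost{}$. The only difference is technical: the paper gets the identity by observing that the non-negative map $v\mapsto \Costu{(v)}-\Cost{(\Endp{(v)})}$ has a minimum (equal to zero) at $u$ and is differentiable there---differentiability of $\Cost{}$ at the single point $x$ together with smoothness of $\Endp{}$ suffices via the chain rule---so your smooth substitute $\phi$ and the $o(\|v-u\|_{L^2})$ bookkeeping, while correct, are an unnecessary detour, and the proximal-subdifferential pitfall you rightly flag is never used by the paper's argument either.
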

	
	\begin{proof}
		Indeed, let $u$ be any optimal control steering $x_0$ to $x$; then it is sufficient to consider the non-negative map
		\be
			v\mapsto \Costu{(v)}-\Cost{(\Endp{(v)})},
		\ee
		which has by definition a local minimum at $u$ (equal to zero). Then
		\be
			0=d_u\Costu{}-\left(d_x\Cost{}\right)\dEndp{u},
		\ee
		and the uniqueness of $u$ (hence the claim) follows as in the previous proof.
	\end{proof}
	
\subsection{Continuity points}
	
	We are also interested in the subset $\Sigma_c$ of the \emph{points of continuity} for the value function. 	
It is a fact from general topology that a lower semicontinuity functions has plenty of continuity points.
	
	\begin{lemma}\label{prop:DensityContinuity}
		The set $\Sigma_c$ is a residual subset of $\Int{\Att}$.
	\end{lemma}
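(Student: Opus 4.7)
The plan is to recognize this as the classical fact that a real-valued lower semicontinuous function on a Baire space has a residual set of continuity points, and to carry out the standard oscillation + Baire category argument. The underlying space $\Int{\Att}$ is open in the manifold $M$, hence locally compact Hausdorff and a Baire space, and $\Cost{}$ is real-valued and lower semicontinuous on it by Proposition \ref{prop:Semicontinuous}.

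First I would introduce the upper semicontinuous envelope $\phi(x):=\lim_{\delta\to 0^{+}}\sup_{B(x,\delta)}\Cost{}$ and the one-sided oscillation $\tilde\omega(x):=\phi(x)-\Cost{(x)}\in[0,+\infty]$. Lower semicontinuity of $\Cost{}$ ensures $\tilde\omega\geq 0$, and $\Cost{}$ is continuous at $x$ exactly when $\tilde\omega(x)=0$; by a direct check $\phi$ is upper semicontinuous (if $\phi(x_{0})<c$ one finds $\delta_{0}>0$ with $\sup_{B(x_{0},\delta_{0})}\Cost{}<c$, and then $\phi(x)<c$ for every $x$ close enough to $x_{0}$), so that $\tilde\omega=\phi-\Cost{}$ is a sum of two upper semicontinuous functions and is itself upper semicontinuous. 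Consequently each $U_{n}:=\{\tilde\omega<1/n\}$ is open in $\Int{\Att}$, and $\Sigma_{c}=\bigcap_{n\in\N}U_{n}$ is automatically a $G_{\delta}$ set.

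The crux is to prove each $U_{n}$ dense in $\Int{\Att}$, and this is where the argument needs care. Given a nonempty open $V\subset\Int{\Att}$, I would first decompose $V=\bigcup_{k\in\N}(\{\Cost{}\leq k\}\cap V)$, each piece being relatively closed by lower semicontinuity, and apply Baire's theorem to extract a nonempty open subset $W\subset V$ on which $\Cost{}$ is uniformly bounded above by some $k_{0}$. On this $W$ I would set $\epsilon:=1/(2n)$ and consider the nested, relatively closed sublevel sets $B_{j}:=\{x\in W:\Cost{(x)}\leq j\epsilon\}$ for $j\in\mathbb{Z}$. The smallest index $j_{0}$ for which $B_{j_{0}}$ has nonempty interior $\Omega$ in $W$ is finite (since $B_{j}=W$ as soon as $j\epsilon\geq k_{0}$), and by minimality $B_{j_{0}-1}$ has empty interior in $\Omega$. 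Picking any $x\in\Omega\setminus B_{j_{0}-1}$ and a sufficiently small neighbourhood $U'\subset\Omega\setminus B_{j_{0}-1}$ of $x$, one has $(j_{0}-1)\epsilon<\Cost{}\leq j_{0}\epsilon$ on $U'$, whence $\phi(x)\leq j_{0}\epsilon$ and $\Cost{(x)}>(j_{0}-1)\epsilon$, so $\tilde\omega(x)\leq\epsilon<1/n$ and $x\in U_{n}\cap V$.

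The main obstacle is precisely this double Baire step: one must simultaneously control $\phi(x)$ from above \emph{and} $\Cost{(x)}$ from below, which is why the decomposition proceeds in two stages — first a uniform upper bound on a smaller open set, then a thin-strip refinement that traps $\Cost{(x)}$ inside an interval of length $\epsilon$. Once each $U_{n}$ is open and dense, Baire's theorem immediately gives that $\Sigma_{c}=\bigcap_{n}U_{n}$ is a dense $G_{\delta}$, and therefore residual, subset of $\Int{\Att}$.
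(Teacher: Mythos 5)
Your proof is correct, but it takes a genuinely different route from the paper's. The paper argues directly on the complement of $\Sigma_c$: if $\Cost{}$ fails to be continuous (hence, being lower semicontinuous, fails to be upper semicontinuous) at $x$, one picks a rational $r$ with $\Cost{(x)}<r<\Cost{(x)}+\varepsilon$ and observes that $x$ lies in $K_r\setminus\Int{K_r}$, where $K_r=\{\Cost{}\leq r\}$ is closed by lower semicontinuity; thus the discontinuity set is contained in $\bigcup_{r\in\Q}\left(K_r\setminus\Int{K_r}\right)$, a countable union of closed nowhere dense sets, and residuality is immediate, with no density argument for individual open sets and Baire invoked only through the definition of residual set. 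You instead run the classical oscillation argument: openness of the sets $U_n=\{\tilde\omega<1/n\}$ via upper semicontinuity of $\tilde\omega$, then density of each $U_n$ through a two-stage Baire decomposition inside an arbitrary open $V$. Your version is longer but exhibits $\Sigma_c$ explicitly as the dense $G_\delta$ set $\bigcap_n U_n$ of points of zero oscillation, while the paper's is shorter and needs no oscillation function. One justification in your density step should be tightened: the parenthetical remark that $B_j=W$ once $j\epsilon\geq k_0$ only bounds the candidate index from above; for the \emph{smallest} $j_0$ with $\Int{B_{j_0}}\neq\emptyset$ to exist you also need the family of indices to be bounded below, which follows since $\Cost{}$ is bounded below, e.g. $\Cost{}\geq -\tfrac{T}{2}\sup_M Q$ by (H3), or locally after shrinking $W$ by lower semicontinuity, so that $B_j=\emptyset$ for $j$ sufficiently negative; with this remark your argument is complete.
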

	 Recall that a residual subset of a topological space $X$ is the complement of a union of countably many nowhere dense subsets of $X$. This fact is well-known but the proof is often presented for functions defined on complete metric spaces. For the sake of completeness, we give a proof in the Appendix. 
	
The existence of points of continuity is tightly related to the compactness of optimal controls, as it is shown in the next lemma.
	
	\begin{lemma}\label{lemma:Contx}
		Let $x\in\Int{\Att}$ be a continuity point of $\Cost{}$. Let $\{x_n\}_{n\in\N}\subset \Int{\Att}$ be a sequence converging to $x$ and let $u_n$ be an optimal control steering $x_0$ to $x_n$. Then there exists a subsequence $\{x_{n_k}\}_{k\in\N}\subset\{x_n\}_{n\in\N}$, whose associated sequence of optimal controls $\{u_{n_k}\}_{k\in\N}$, strongly converges in $L^2([0,T],\R^d)$ to some optimal control $u$ which steers $x_0$ to $x$.
	\end{lemma}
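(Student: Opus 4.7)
The plan is to use the direct method of the calculus of variations: extract a weakly convergent subsequence, pass to the limit to obtain an optimal control, and then upgrade the convergence to strong by exploiting that both the $L^2$ norms and the cost values converge.

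First, I would establish that $\{u_n\}_{n\in\N}$ is bounded in $L^2([0,T],\R^d)$. Since $x$ is a continuity point and $\Costu{(u_n)}=\Cost{(x_n)}$, the sequence $\{\Costu{(u_n)}\}$ is bounded. Expanding the cost and using hypothesis (H3), which provides an upper bound on $Q$, gives
\be
\|u_n\|_{L^2}^2 = 2\Costu{(u_n)}+\int_0^T Q(x_{u_n}(t))\,dt \leq 2\Cost{(x_n)}+T\sup_M Q,
\ee
so the right-hand side is bounded. By reflexivity of $L^2$, up to extracting a subsequence (not relabeled) we can assume $u_n\rightharpoonup u$ weakly in $L^2([0,T],\R^d)$.

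Next, the discussion following \eqref{eq:Convergence} (which relies crucially on (H2)) guarantees that $u\in\Omega^T_{x_0}$ is admissible, that the trajectories $x_{u_n}(\cdot)$ converge uniformly to $x_u(\cdot)$, and that $\Endp{(u_n)}\to \Endp{(u)}$. Thus $\Endp{(u)}=x$. Moreover, by (H2) all trajectories $x_{u_n}(\cdot)$ and $x_u(\cdot)$ lie in a common compact set $K_T$, on which $Q$ is bounded and continuous; together with uniform convergence this yields
\be
\lim_{n\to\infty}\int_0^T Q(x_{u_n}(t))\,dt = \int_0^T Q(x_u(t))\,dt.
\ee
The $L^2$-norm is weakly lower semicontinuous, so $\|u\|_{L^2}^2\leq \liminf_n \|u_n\|_{L^2}^2$. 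Combining these two facts with the definition of the cost,
\be
\Costu{(u)} \leq \liminf_{n\to\infty}\Costu{(u_n)} = \lim_{n\to\infty}\Cost{(x_n)} = \Cost{(x)},
\ee
where the last equality uses that $x$ is a continuity point. Since $\Endp{(u)}=x$, the reverse inequality $\Costu{(u)}\geq \Cost{(x)}$ holds by definition of the value function, so $u$ is optimal and $\Costu{(u)}=\lim_n \Costu{(u_n)}$.

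The last and perhaps most delicate step is upgrading weak convergence to strong convergence. From $\Costu{(u_n)}\to \Costu{(u)}$ and the convergence of the potential integrals established above, we deduce
\be
\lim_{n\to\infty}\|u_n\|_{L^2}^2 = \|u\|_{L^2}^2.
\ee
Combined with $u_n\rightharpoonup u$, the Hilbert-space identity $\|u_n-u\|_{L^2}^2=\|u_n\|_{L^2}^2-2\langle u_n,u\rangle_{L^2}+\|u\|_{L^2}^2$ shows $u_n\to u$ strongly in $L^2([0,T],\R^d)$, as desired. The main point where care is required is the joint passage to the limit in the cost: one must make sure that only the quadratic $\|u\|^2$-term is subject to weak lower semicontinuity, while the potential term passes to the limit as an equality. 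This is precisely what (H2) and (H3) allow us to arrange.
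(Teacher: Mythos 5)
Your proof is correct and follows essentially the same route as the paper: extract a weakly convergent subsequence (the uniform $L^2$ bound coming from continuity of $\Cost{}$ at $x$ together with (H3)), pass to the limit in the potential term via (H2), and use weak lower semicontinuity of the norm together with $\Cost{(x_n)}\to\Cost{(x)}$ to obtain both optimality of the limit and convergence of the $L^2$-norms, hence strong convergence. The only difference is organizational: the paper extracts optimality and norm convergence simultaneously from a single chain of inequalities, whereas you establish optimality first and then upgrade to strong convergence.
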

	
	\begin{proof}
		Let $\{x_n\}_{n\in\N}\subset \Int{\Att}$ be a sequence converging to $x$ and let $\{u_n\}_{n\in\N}$ be the corresponding sequence of optimal controls. Since $x$ is a continuity point for the value function, it is not restrictive to assume that the sequence of norms $\{\|u_n\|_{L^2}\}_{n\in\N}$ remains uniformly bounded, and thus we can suppose to extract a subsequence $\{u_{n_k}\}_{k\in\N}\subset \{u_n\}_{n\in\N}$ such that $u_{n_k}\rightharpoonup u$ weakly in $L^2([0,T],\R^d)$, which in turn implies
		\be
			\lim_{k\to\infty}\int_0^TQ(x_{u_{n_k}}(t))dt=\int_0^TQ(x_{u}(t))dt.
		\ee
		Then we have
		\begin{align}
			\frac{1}{2}\|u\|^2_{L^2}-\frac{1}{2}\int_0^TQ(x_{u}(t))dt&\leq \liminf_{k\to\infty}\frac{1}{2}\|u_{n_k}\|^2_{L^2}-\frac{1}{2}\int_0^TQ(x_{u_{n_k}}(t))dt\\%&\leq \limsup_{k\to\infty}\frac{1}{2}\|u_{n_k}\|^2_{L^2}-\frac{1}{2}\int_0^TQ(x_{u_{n_k}}(t))dt\\
			&=\lim_{k\to\infty}\Cost{(\Endp{(u_{n_k})})}=\lim_{k\to\infty}\Cost{(x_{n_k})}\\
			&=\Cost{(x)}=\Cost{(\Endp{(u)})}\\&\leq\frac{1}{2}\|u\|^2_{L^2}-\frac{1}{2}\int_0^TQ(x_u(t))dt,
		\end{align}
		which readily means both that $\lim_{k\to\infty}\|u_{n_k}\|_{L^2}=\|u\|_{L^2}$ (from which the convergence in $L^{2}$ follows), and that $\Costu{(u)}=\Cost{(\Endp{(u)})}=\Cost{(x)}$.
	\end{proof}
	
\subsection{Tame points} 
	
	We have introduced so far two subsets of $\Int{\Att}$, namely the sets $\Sigma_c$ of the continuity points of $\Cost{}$, and the set $\Sigma_f$ of fair points, which are essentially points that are well-parametrized by the exponential map; both these sets are dense in $\Int{\Att}$, still their intersection can be empty. Here is the main differences with respect to the arguments of \cite{agrachevsmooth}: indeed in that context every fair point is a point of continuity. In our setting, to relate $\Sigma_c$ and $\Sigma_f$, we introduce the following set.
	
	\begin{defi}[Tame Points]\label{defi:tameP}
		Let $x\in\Int{\Att}$. We say that $x$ is a \emph{tame point} if for every optimal control $u$ steering $x_0$ to $x$ there holds
		\be 
			\Rank{\dEndp{u}}=\dim{M}=m.
		\ee
		We call $\Sigma_t$ the set of tame points.
	\end{defi}
Tame points  locate open sets on which the value function $\Cost{}$ is continuous. The precise statement is contained in the following lemma, whose first part of the proof is an adaptation of the arguments of \cite[Theorem 4.6]{Tre00}. A complete proof is contained in Appendix \ref{sec:appendix}. 
			
	\begin{lemma}\label{lemma:ContNearReg}
		Let $x\in\Int{\Att}$ be a tame point. Then
		\begin{itemize}
			\item [(i)] $x$ is a point of continuity of $\Cost{}$;
			\item [(ii)] there exists a neighborhood $O_x$ of $x$ such that every $y\in O_x$ is a tame point. In particular, the restriction $\Cost{}\big|_{O_x}$ is a continuous map.
		\end{itemize}
	\end{lemma}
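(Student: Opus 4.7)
Since $\Cost{}$ is already lower semicontinuous on $\Int{\Att}$ by Proposition \ref{prop:Semicontinuous}, the proof of (i) reduces to establishing upper semicontinuity at the tame point $x$. The plan is to exploit the surjectivity of $\dEndp{u}$ at any optimal control $u$ steering $x_0$ to $x$: by a standard submersion argument, one picks a finite-dimensional subspace $W \subset L^2([0,T],\R^d)$ of dimension $m$ on which $\dEndp{u}|_W$ is an isomorphism onto $T_xM$; then the implicit function theorem, applied to $E^T_{x_0}$ restricted to $u+W$, yields a smooth local right inverse $y\mapsto v(y)$ of $\Endp{}$ defined in a neighborhood of $x$, with $v(x)=u$. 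Given any sequence $x_n \to x$, set $u_n := v(x_n)$, so $u_n \to u$ in $L^2$. Continuity of the cost on $L^2([0,T],\R^d)$ then gives
\be
\limsup_{n\to\infty}\Cost{(x_n)}\leq \lim_{n\to\infty}\Costu{(u_n)}=\Costu{(u)}=\Cost{(x)},
\ee
which combined with lower semicontinuity proves (i).

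For (ii), I argue by contradiction. If no open neighborhood of $x$ consists entirely of tame points, there is a sequence $x_n \to x$ with an optimal control $u_n$ steering $x_0$ to $x_n$ satisfying $\Rank \dEndp{u_n} < m$. Part (i) already ensures $\Cost{}$ is continuous at $x$, so Lemma \ref{lemma:Contx} applies: after extracting a subsequence (still indexed by $n$), the $u_n$ converge strongly in $L^2$ to some optimal control $u^*$ at $x$. By \eqref{eq:Convergence} the differentials $\dEndp{u_n}$ converge to $\dEndp{u^*}$ in the strong operator norm. Reading everything in a coordinate chart centered at $x$, each $\dEndp{u_n}$ becomes a bounded linear map $L^2([0,T],\R^d)\to \R^m$; the subset of such maps that are surjective is open in the operator norm topology (since the target is finite-dimensional). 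Tameness of $x$ gives surjectivity of $\dEndp{u^*}$, forcing $\dEndp{u_n}$ to be surjective for all large $n$, a contradiction. This produces the required neighborhood $O_x$ of tame points, and then (i) applied pointwise on $O_x$ yields continuity of $\Cost{}\big|_{O_x}$.

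The main obstacle is the contradiction step in (ii): it hinges on upgrading the weak compactness of optimal controls to strong $L^2$ convergence (available only because of (i) combined with Lemma \ref{lemma:Contx}), and then on transporting that strong control-convergence into operator-norm convergence of the differentials, where openness of the full-rank condition can be invoked. A related subtlety is that the targets $T_{x_n}M$ of the differentials vary with $n$; this is harmless once one fixes a chart near $x$ which trivializes the tangent bundle, so all the maps $\dEndp{u_n}$ and the limit $\dEndp{u^*}$ live between the same pair of spaces and can be meaningfully compared.
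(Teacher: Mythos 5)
Your proposal is correct and takes essentially the same route as the paper: both parts rest on the surjectivity of $\dEndp{u}$ at every optimal control, with (i) obtained from local openness of the end-point map combined with lower semicontinuity (you build a smooth right inverse and get upper semicontinuity directly, while the paper runs a cluster-point contradiction using nearby controls of cost at most $\Cost{(x)}+\varepsilon$), and (ii) obtained exactly as in the paper from the strong $L^2$ compactness of Lemma \ref{lemma:Contx} together with the operator-norm convergence \eqref{eq:Convergence}. The only cosmetic difference in (ii) is that the paper detects the rank drop dually, via norm-one covectors with $\lambda_n\dEndp{u_n}=0$ passing to the limit, whereas you invoke openness of the surjectivity condition for maps onto $\R^m$ in a fixed chart; the two arguments are equivalent.
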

	The previous lemma can be restated as follows.
	\begin{cor}\label{cor:TameP}
		The set $\Sigma_t$ of tame points is open. Moreover  $\Sigma_t\subset \Sigma_c$.
	\end{cor}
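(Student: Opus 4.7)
The corollary is essentially a restatement of Lemma \ref{lemma:ContNearReg}, so the proof is immediate and I would simply unpack the two clauses.

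For openness of $\Sigma_t$, I would pick an arbitrary $x \in \Sigma_t$ and invoke part (ii) of the lemma, which supplies a neighborhood $O_x$ of $x$ in which every point is tame. This shows $O_x \subset \Sigma_t$, so $\Sigma_t$ is a neighborhood of each of its points, hence open in $\Int{\Att}$.

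For the inclusion $\Sigma_t \subset \Sigma_c$, I would either apply part (i) of the lemma pointwise, noting that every tame point is a continuity point of $\Cost{}$; or, equivalently, observe that the restriction of $\Cost{}$ to the open set $O_x$ from part (ii) is continuous, which by choosing $y = x$ yields continuity of $\Cost{}$ at $x$. Either way, each $x \in \Sigma_t$ lies in $\Sigma_c$.

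There is no substantive obstacle here: the content has been pushed into Lemma \ref{lemma:ContNearReg}, whose proof (via the open mapping/submersion argument adapted from \cite{Tre00} and carried out in Appendix \ref{sec:appendix}) is where the real work takes place. The corollary itself only needs to assemble these two conclusions into the language of the sets $\Sigma_t$ and $\Sigma_c$.
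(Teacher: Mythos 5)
Your proposal is correct and matches the paper exactly: the paper presents the corollary as an immediate restatement of Lemma \ref{lemma:ContNearReg}, with part (ii) giving openness of $\Sigma_t$ and part (i) giving $\Sigma_t\subset\Sigma_c$. No further comment is needed.
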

	
\section{Density of tame points}\label{s:tamet}
	
	This section is devoted to the proof that the set of tame point is open and dense in the interior of the attainable set. We start with the observation that the set of optimal controls reaching a fixed point $x$ is compact in the $L^2$-topology.
	\begin{lemma}\label{lemma:Compx}
		For every $x\in \Att$, the set 
		\be\label{eq:Ux}
		\mc{U}_x=\left\{u\in \Omega_{x_0}^T\mid u\textrm{ is an optimal control steering $x_0$ to $x$}\right\} 
		\ee 
		is strongly compact in $L^2([0,T],\R^d)$.
	\end{lemma}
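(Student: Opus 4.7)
\begin{proof}[Proof plan for Lemma \ref{lemma:Compx}]
The plan is to show that $\mc{U}_x$ is sequentially compact in the strong $L^{2}$ topology, which suffices since $L^{2}$ is metrizable. The strategy is essentially the one already used in the proof of Lemma \ref{lemma:Contx}, with the single added observation needed to get started: on $\mc{U}_x$, the cost $\Costu{}$ takes the constant value $\Cost{(x)}$, so $\mc{U}_x$ is automatically $L^{2}$-bounded.

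\medskip

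\textbf{Step 1: uniform bound on $\mc{U}_x$.}
Since $Q$ is bounded from above by some constant $M_Q$ by assumption (H3), for every admissible control $u$ one has
\be
	\Costu{(u)} = \frac{1}{2}\|u\|_{L^{2}}^{2} - \frac{1}{2}\int_{0}^{T}Q(x_u(t))\,dt \geq \frac{1}{2}\|u\|_{L^{2}}^{2} - \frac{T M_Q}{2}.
\ee
For $u\in \mc{U}_x$ we have $\Costu{(u)}=\Cost{(x)}$, so $\|u\|_{L^{2}}^{2}\leq 2\Cost{(x)}+ T M_Q$. Hence $\mc{U}_x$ is bounded in $L^{2}([0,T],\R^{d})$.

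\medskip

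\textbf{Step 2: weak limit lies in $\mc{U}_x$.}
Let $\{u_n\}_{n\in\N}\subset \mc{U}_x$. By reflexivity of $L^{2}$ and Step 1, a subsequence (still denoted $\{u_n\}$) converges weakly to some $u\in L^{2}([0,T],\R^{d})$. The weak continuity of the end-point map recalled in \eqref{eq:Convergence} yields that $u$ is admissible and $\Endp{(u)}=x$. Moreover, the associated trajectories $x_{u_n}(\cdot)$ converge uniformly on $[0,T]$ to $x_u(\cdot)$ (by the argument recalled after \eqref{eq:Convergence} using (H2)), so that
\be
	\lim_{n\to\infty}\int_{0}^{T}Q(x_{u_n}(t))\,dt=\int_{0}^{T}Q(x_u(t))\,dt.
\ee
Combining this with the weak lower semicontinuity of the $L^{2}$-norm,
\be
	\Costu{(u)} \leq \liminf_{n\to\infty}\Costu{(u_n)} = \Cost{(x)},
\ee
so that $u\in\mc{U}_x$ since $\Costu{(u)}\geq \Cost{(\Endp{(u)})}=\Cost{(x)}$ by definition of the value function.

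\medskip

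\textbf{Step 3: upgrading to strong convergence.}
From $\Costu{(u_n)}=\Costu{(u)}=\Cost{(x)}$ and the convergence of the potential integrals established in Step 2, we deduce
\be
	\lim_{n\to\infty}\|u_n\|_{L^{2}}^{2}=2\Cost{(x)}+\int_{0}^{T}Q(x_u(t))\,dt=\|u\|_{L^{2}}^{2}.
\ee
Norm convergence together with weak convergence in a Hilbert space implies strong convergence, so $u_n\to u$ in $L^{2}([0,T],\R^{d})$, concluding the proof.
\end{proof}

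\medskip

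The only delicate point is Step 2, ensuring that the weak $L^{2}$ limit of minimizers is itself a minimizer. This is not truly an obstacle: assumption (H2) forces all the $x_{u_n}$ to lie in a fixed compact set, so the nonlocal (and possibly non-convex) $Q$-term in the cost is continuous under weak convergence of controls, while the quadratic kinetic term is weakly lower semicontinuous. No new ideas beyond those already used in Lemma \ref{lemma:Contx} are needed; the essential novelty here is Step 1, which exploits the fact that on $\mc{U}_x$ the cost is literally constant, to deduce the a priori $L^{2}$-bound for free from (H3).
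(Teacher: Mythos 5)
Your proof is correct and follows essentially the same route as the paper: an a priori $L^2$-bound from the constancy of the cost on $\mc{U}_x$ and (H3), weak compactness, weak continuity of the end-point map together with uniform convergence of trajectories under (H2) to pass the potential term to the limit, weak lower semicontinuity of the norm to conclude optimality of the limit, and finally norm convergence plus weak convergence to upgrade to strong $L^2$ convergence. The only difference is that you spell out explicitly the bound and the norm-convergence step that the paper states more compactly.
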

	
	\begin{proof}
		Let $\{u_n\}_{n\in\N}\subset \mc{U}_x$. Then we have $\Cost{(x)}=\Costu{(u_n)}$ for every $n\in\N$, and consequently there exists $C>0$ such that $\|u_n\|_{L^2}\leq C$ for every $n\in\N$. Thus we may assume that there exists a subsequence $\{u_{n_k}\}_{k\in\N}\subset \{u_n\}_{n\in\N}$, and a control $u$ steering $x_0$ to $x$, such that $u_{n_k}\rightharpoonup u$ weakly in $L^2([0,T],\R^d)$. This, on the other hand, implies that
		\begin{align}
			\frac{1}{2}\|u\|^2_{L^2}-\frac{1}{2}\int_0^TQ(x_{u}(t))dt&\leq \liminf_{k\to\infty}\frac{1}{2}\|u_{n_k}\|^2_{L^2}-\frac{1}{2}\int_0^TQ(x_{u_{n_k}}(t))dt\\
			&=\liminf_{k\to\infty}\Costu{(u_{n_k})}=\Cost{(x)}\\
			&=\Costu{(u)}=\frac{1}{2}\|u\|^2_{L^2}-\frac{1}{2}\int_0^TQ(x_{u}(t))dt,
		\end{align}
		therefore $\|u\|_{L^2}=\lim_{k\to\infty}\|u_{n_k}\|_{L^2}$, and the claim is proved.
	\end{proof}
	
	We introduce now the notion of the \emph{class} of a point. Heuristically, the class of a point $x\in\Int{\Att}$  measures how much that point ``fails'' to be tame (see Definition \ref{defi:tameP}). 
	
	\begin{defi}
		Let $x\in \Att$. We define
		\be\label{eq:Rankx}
			\Class{(x)}=\min_{u\in\mc{U}_x}\Rank{\dEndp{u}}.
		\ee
	\end{defi}
	Any point $x\in\Int{\Att}$ satisfying $\Class{(x)}=m$ is necessarily a tame point.
	
	\begin{defi}\label{defi:closedMinRank}
		We also define the subset $\mc{U}_x^{\min}\subset\mc{U}_x$ as follows:
		\be\label{eq:Umin}
			\mc{U}_x^{\min}=\left\{u\in\mc{U}_x\mid \Rank{\dEndp{u}}=\Class{(x)}\right\}.
		\ee
		By the lower semicontinuity of the rank function, the set $\mc{U}_x^{\min}$ is closed in $\mc{U}_x$, hence (strongly) compact in $L^2([0,T],\R^d)$.
	\end{defi}
	
	We can now state the main result of this section.
	
	\begin{thm}\label{thm:Reg}
		The set $\Sigma_t$ of tame points is dense in $\Int{\Att}$.
	\end{thm}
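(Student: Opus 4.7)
The plan is to argue by contradiction. Suppose $\Sigma_{t}$ is not dense in $\Int{\Att}$, so there is a nonempty open set $V\subset\Int{\Att}$ with $\Class{(x)}\le m-1$ for every $x\in V$. The strategy is to combine the density of fair points (Corollary~\ref{cor:DensityFair}) with Sard's theorem applied to the exponential map $\mc{E}_{x_0}^T$, after first localizing to a subset of constant class.

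First I would establish that the integer-valued function $\Class{}$ is lower semicontinuous on $\Sigma_{c}\cap V$. Given $x_n\to x$ there and $u_n\in\mc{U}_{x_n}^{\min}$, Lemma~\ref{lemma:Contx} provides a subsequential strong $L^{2}$-limit $u^{\infty}\in\mc{U}_x$; combined with the operator-norm convergence $\dEndp{u_n}\to\dEndp{u^{\infty}}$ from \eqref{eq:Convergence} and the lower semicontinuity of rank for convergent families of linear maps, this yields $\Class{(x)}\le\Rank\dEndp{u^{\infty}}\le\liminf_{n}\Class{(x_n)}$. Since $\Class{}$ takes values in $\{0,\dots,m-1\}$ on $V$, its maximum $k^{*}$ on $\Sigma_{c}\cap V$ is attained on a relatively open subset; hence there is a nonempty open $\tilde W\subset V$ such that $\Class{}\equiv k^{*}$ on $\tilde W\cap\Sigma_{c}$.

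Next, exploit fair points: by Corollary~\ref{cor:DensityFair}, $\Sigma_{f}$ is dense in $\tilde W$, and at any fair $x\in\tilde W$ the unique optimal control $u^{*}$ is normal, with lift $\lambda(\cdot)$ satisfying $x=\mc{E}_{x_0}^T(\lambda(0))$, and $\Rank\dEndp{u^{*}}=\Class{(x)}\le m-1$. Since $\mc{E}_{x_0}^T$ factors as $\Endp{}\circ(\lambda_{0}\mapsto u_{\lambda_{0}})$, one gets $\Rank d_{\lambda(0)}\mc{E}_{x_{0}}^T\le m-1$, so every fair point of $\tilde W$ is a critical value of the exponential map. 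Density of such points would then place $\tilde W$ in the closure of the critical-value set of $\mc{E}_{x_{0}}^T$.

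To close the contradiction one needs a quantitative version of Sard. The restriction of $\mc{E}_{x_{0}}^T$ to a closed ball $B_{R}\subset T^{*}_{x_{0}}M$ is proper (trajectories with bounded data remain in a compact set by (H2)), so the image $\mc{E}_{x_{0}}^T(B_{R}\cap\mathrm{Crit})$ is compact and of measure zero, hence nowhere dense in $M$. A uniform bound $|\lambda^{x}(0)|\le R$ over fair $x$ in a relatively compact $\tilde W'\Subset\tilde W$ would then force the open set $\tilde W'$ into a nowhere dense set, the desired contradiction. The main obstacle I expect is precisely this bound on the initial PMP covectors. A local upper bound on $\Cost{}$---obtained by sectioning a regular preimage of $\Endp{}$ through a point of $\Int{\Att}$ via the inverse function theorem---yields uniform $L^{2}$-bounds on the optimal controls $u^{*}_{x}$ for $x\in\tilde W'$, but transferring these to bounds on $|\lambda^{x}(0)|$ is delicate, since a priori the covector can blow up in directions orthogonal to $\mathrm{span}\{X_{i}(x(t))\}$. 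This is precisely the fair/continuity gap highlighted in the introduction, and I expect the ad hoc arguments of Section~\ref{s:tamet} to combine the conservation of the Hamiltonian along normal extremals with the weak H\"ormander condition (H1) to rule out such blow-up.
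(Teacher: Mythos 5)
Your overall strategy coincides with the paper's: localize to an open set where the class is constant (your lower-semicontinuity step is the paper's Lemma \ref{lemma:neigh}), use density of fair points to realize a whole open set as critical values of $\mc{E}_{x_0}^T$ restricted to a compact set of covectors, and conclude by a Sard/measure-zero argument. But the step you yourself flag as ``the main obstacle'' --- a uniform bound $|\lambda^{x}(0)|\le R$ on initial covectors of normal lifts over fair points --- is precisely the technical core of the proof, and you neither prove it nor indicate a mechanism that would. Worse, the mechanism you conjecture (conservation of the Hamiltonian along normal extremals plus the weak H\"ormander condition) is not how this difficulty is resolved, and there is no reason it should suffice: on the open set in question every optimal control has $\Rank\dEndp{u}\le m-1$, so the normal final covector is determined only up to the unbounded affine set $\eta_x+\ker\left(\dEndp{u}\right)^{*}$, and nothing prevents the ``canonical'' lift attached to a given fair point from blowing up in the $\ker\left(\dEndp{u}\right)^{*}$ directions as the point varies.

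The paper's resolution (Proposition \ref{prop:constant} and Corollary \ref{cor:ball}) does not bound the covector of any particular lift; it exploits the fact that, at a fair point, \emph{every} element of the multiplier set $\Xi_x^u$ is a legitimate normal initial covector for the same trajectory, so $\mc{E}_{x_0}^T$ sends the whole affine set to the point. One then only needs to exhibit \emph{some} element of each nearby $\Xi_y^v$ inside a fixed compact set, and this is achieved by estimating the component of the multiplier along $\wh{Z}_v=(\ker(\dEndp{v})^{*})^{\perp}$ via $\langle v,w\rangle_{L^2}=\langle\wh\xi_v,\dEndp{v}(w)\rangle$, the uniform $L^2$-bound on optimal controls, and the uniform invertibility of $\dEndp{v}$ on a continuously varying $k_O$-dimensional subspace $W_v$ (here the constancy of the class is used crucially). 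This produces finitely many compact $k_O$-dimensional balls $A_{u_1},\dots,A_{u_l}$, with $k_O<m$, whose images under $\mc{E}_{x_0}^T$ already cover $\Sigma_f\cap O$ and have measure zero; full-fiber Sard on a ball $B_R\subset T^*_{x_0}M$, as in your sketch, is then not even needed. Since your argument is complete only modulo this covector-localization step, and your proposed route to it is not viable as stated, the proposal has a genuine gap at the decisive point.
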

	
	We postpone the proof of Theorem \ref{thm:Reg} at the end of the section, since we need first a series of preliminary results.
	
	\begin{defi}\label{defi:Xi}
		 Pick $x$ in $\Int{\Att}$ and let $u\in \mc{U}_x^{\min}$. If $u$ is not strictly abnormal, then we choose a normal covector $\eta_x\in T^*_xM$ associated to $u$ and we define
		 \be\label{eq:PiHatx}
			 \wh{\Xi}_x^u=\left\{\xi\in T^*_xM\mid \xi\dEndp{u}=\eta_x\dEndp{u}\right\}=\eta_x+\ker\left(\dEndp{u}\right)^*\subset T^*_xM.
		 \ee
		 If instead $u$ is strictly abnormal, we simply set $\wh{\Xi}_x^u=\ker\left(\dEndp{u}\right)^*\subset T^*_xM$.
	\end{defi} 
	
	Notice that whenever $u$ is strictly abnormal, then $\wh{\Xi}_x^u$ is a linear subspace, while if $u$ admits at least one normal lift, $\wh{\Xi}_x^u$ is affine; also, the dimension of these subspaces equals $m-\Class{(x)}\geq 0$.
	We call $\wh{Z}_u\subset T^*_xM$ the orthogonal subspace to $\ker\left(\dEndp{u}\right)^*$, of dimension equal to $\Class{(x)}$, for which: 
	\be\label{eq:splitting}
		T_x^*M=\ker\left(\dEndp{u}\right)^*\oplus \wh{Z}_u;
	\ee
	moreover we let $\pi_{\wh{Z}_u}:T^*_xM\to \wh{Z}_u$ to be the orthogonal projection subordinated to this splitting, that is satisfying:
	\be
		\ker(\pi_{\wh{Z}_u})=\ker\left(\dEndp{u}\right)^*.
	\ee
	
	Finally, by means of the adjoint map $\left(\Flow{0,T}{u}\right)^*$, we can pull the spaces $\wh{\Xi}_x^u$ ``back'' to $T_{x_0}M$, and set
	\be
		\Xi_x^u:=\left(\Flow{0,T}{u}\right)^*\wh{\Xi}_x^u\subset T_{x_0}^*M.
	\ee 
	
	\begin{center}
		\begin{figure}[ht!]
			\includegraphics[scale=0.8]{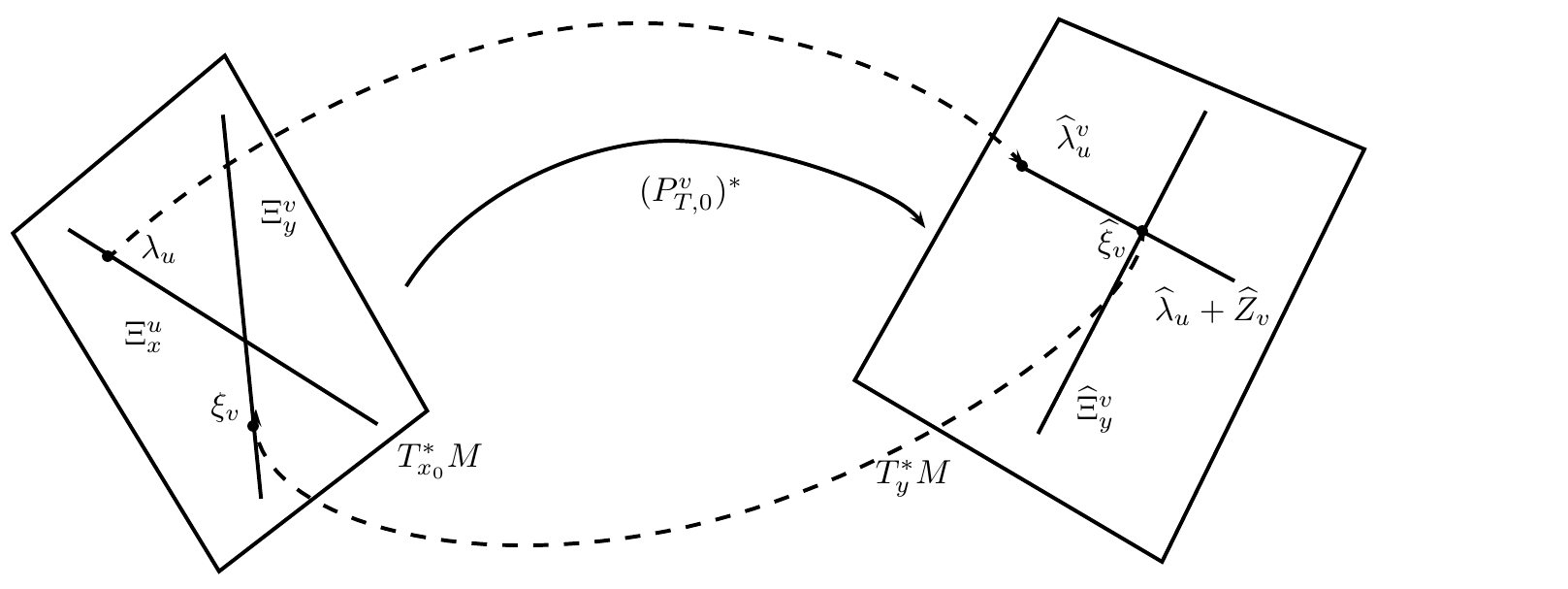}
			\caption{We set $y=\Endp{(v)}$. The subspace $\wh{\Xi}_y^v$ is linear if $v$ is strictly abnormal, and affine otherwise; $\wh{Z}_v$ and $\ker\left(\dEndp{v}\right)^*$ are orthogonal. The point $\wh{\xi}_v$ belong to $T^*_yM$, and is then pulled back on $T^*_{x_0}M$.}
			\label{fig:fig1}
		\end{figure}
	\end{center}
	The following estimate will be crucial in what follows.
	\begin{prop}\label{prop:constant}
		Let $O\subset \Int{\Att}$ be an open set, and assume that:
		\be
			\Class{(z)}\equiv k_O< m,\quad\textrm{for every }z\in O.
		\ee 
		Let $x\in O$ and $u\in\mc{U}_x^{\min}$. Then there exists a neighborhood $\mc{V}_u\subset \Omega_{x_0}^T$ of $u$ such that, for every $\lambda_u\in\Xi_x^u\subset T_{x_0}^*M$, there exists a constant $K=K(\lambda_u)>1$ such that, for every $v\in\mc{V}_u\cap\mc{U}_{\Endp{(v)}}^{\min}$, there is $\xi_v\in\Xi_{\Endp{(v)}}^v\subset T_{x_0}^*M$ satisfying:
		\be
			|\lambda_u-\xi_v|\leq K.
		\ee
	\end{prop}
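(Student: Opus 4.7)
The plan is to produce explicitly, for every $v \in \mc{V}_u \cap \mc{U}_{\Endp{(v)}}^{\min}$, an element $\xi_v$ of $\Xi_{\Endp{(v)}}^v$ whose norm is bounded by a constant independent of $v$. The claim then follows at once from the triangle inequality $|\lambda_u - \xi_v| \leq |\lambda_u| + |\xi_v|$ by taking $K := |\lambda_u| + \sup_{v} |\xi_v| + 1$.

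After shrinking $\mc{V}_u$ we may assume that $\Endp{(v)} \in O$ for every $v \in \mc{V}_u$, so that membership in $\mc{V}_u \cap \mc{U}_{\Endp{(v)}}^{\min}$ forces $\Rank{\dEndp{v}} = k_O$. On this constant-rank set the map $v \mapsto \ker(\dEndp{v})^*$ varies continuously in the Grassmannian, and the Moore--Penrose pseudo-inverse $v \mapsto (\dEndp{v})^+$ is itself continuous---a classical fact that requires precisely the constant-rank assumption. Shrinking $\mc{V}_u$ further, we may therefore suppose that $\|(\dEndp{v})^+\|$, the operator norm of the pull-back $(\Flow{0,T}{v})^*$, and $|d_v\Costu{}|$ are all bounded on $\mc{V}_u$ by a common constant $C>0$.

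Given such $v$, define $\wh{\eta}_v \in T^*_{\Endp{(v)}}M$ as the minimum-norm representative of $\wh{\Xi}_{\Endp{(v)}}^v$: concretely, $\wh{\eta}_v$ is the minimum-norm solution of $\wh{\eta}_v\,\dEndp{v} = d_v\Costu{}$ when $v$ is not strictly abnormal, and $\wh{\eta}_v := 0$ when $v$ is strictly abnormal. In the first case the pseudo-inverse bound yields $|\wh{\eta}_v| \leq \|(\dEndp{v})^+\|\cdot |d_v\Costu{}| \leq C^2$, while $|\wh{\eta}_v| = 0$ in the second; in both cases one has $\wh{\eta}_v \in \wh{\Xi}_{\Endp{(v)}}^v$ by construction, since $\wh{\eta}_v$ coincides with $\pi_{\wh{Z}_v}(\wh{\eta}_v)$. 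Setting $\xi_v := (\Flow{0,T}{v})^*\wh{\eta}_v$ therefore places $\xi_v$ in $\Xi_{\Endp{(v)}}^v$ with $|\xi_v| \leq C^3$ uniformly, and the choice $K := |\lambda_u| + C^3 + 1$ completes the proof.

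The principal conceptual obstacle is that $\wh{\eta}_v$ is defined by a case split (normal versus strictly abnormal) that may genuinely flip as $v$ varies, so a priori $|\wh{\eta}_v|$ might blow up near such a transition; what rescues the argument is precisely the constant-rank hypothesis, which forces continuity of $(\dEndp{v})^+$ and hence a uniform bound for the normal-case formula, while the strictly abnormal choice $\wh{\eta}_v = 0$ is trivially bounded. A secondary technical point---comparing objects across the varying cotangent spaces $T^*_{\Endp{(v)}}M$---is handled by working in a fixed local chart around $x$, inside which pseudo-inverses, pull-backs, and norms are comparable up to uniform constants.
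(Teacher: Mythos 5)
Your argument is correct and proves the statement as written, but it packages the estimate somewhat differently from the paper. The paper transports $\lambda_u$ forward along the flow, sets $\wh{\lambda}_u^v=(\Flow{T,0}{v})^*\lambda_u$, and takes $\xi_v$ to be (the pullback of) the unique intersection point of the affine slice $\wh{\lambda}_u^v+\wh{Z}_v$ with $\wh{\Xi}_{y}^v$, where $y=\Endp{(v)}$; the whole estimate then reduces to bounding $|\pi_{\wh{Z}_v}(\wh{\xi}_v)|$, which is done via the normal-control identity and a uniform bound on $\|(\dEndp{v}\big|_{W_v})^{-1}\|$. You instead take the minimum-norm representative of $\wh{\Xi}_{y}^v$ (which is exactly $\pi_{\wh{Z}_v}$ of any of its elements), bound it by $\|(\dEndp{v})^+\|\,|d_v\Costu{}|$, and finish with the triangle inequality. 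The analytic core is the same in both proofs: uniformity comes from the constant-rank hypothesis, which makes the restricted inverse (equivalently the Moore--Penrose pseudo-inverse) continuous, hence bounded near $u$ along the constant-rank set, combined with bounds on the flow pullbacks and on the data ($\|v\|_{L^2}$ in the paper, $|d_v\Costu{}|$ in your version --- the latter is arguably cleaner, since it works directly with the multiplier identity $\wh{\eta}_v\,\dEndp{v}=d_v\Costu{}$ and so automatically accounts for the potential term). What your choice gives up is the extra geometric information that the paper's $\xi_v$ lies on the affine slice $\lambda_u+Z_v$: this is precisely what is recorded in the Remark following the proposition and used in the proof of Corollary \ref{cor:ball}, so if your proof replaced the paper's, that corollary would need a small supplementary argument (e.g.\ controlling the oblique projection onto $Z_v$ along the pullback of $\ker(\dEndp{v})^*$, which is bounded by the flow norms). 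Two minor points of care: the uniform bound on $\|(\dEndp{v})^+\|$ is valid only on the constant-rank slice $\mc{V}_u\cap\mc{U}_{\Endp{(v)}}^{\min}$, since the pseudo-inverse blows up where the rank jumps above $k_O$ --- which is indeed how you apply it, though the phrase ``bounded on $\mc{V}_u$'' should be read that way; and, as you note, norms on the varying fibers $T^*_{\Endp{(v)}}M$ must be fixed via a chart or metric, exactly as the paper does implicitly.
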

	
	\begin{proof}
		Let us choose a neighborhood $\mc{V}_u\subset\Omega_{x_0}^T$ of $u$, such that all the endpoints of admissible trajectories driven by controls in $\mathcal{V}_u$ belong to $O$.
		
		Then, if $y=\Endp{(v)}$ for some $v\in\mathcal{V}_u$, it follows that $y\in O$; moreover, if also $v\in\mathcal{U}^{\min}_y$, we can define the $(m-k_O)$-dimensional subspace $\Xi_y^v\subset T^*_{x_0}M$ as in Definition \ref{defi:Xi}. Therefore we can assume from the beginning that all such subspaces $\Xi_y^v$ have dimension constantly equal to $m-k_O>0$. 
		
		Fix $\lambda_u\in\Xi_x^u$, and set
		\be
			\wh{\lambda}_u^v=(\Flow{T,0}{v})^*\lambda_u\in T^*_yM,\quad v\in\mc{V}_u\cap \mc{U}_y^{\min},\quad y=\Endp{(v)}.
		\ee
		The intersection $(\wh{\lambda}_u^v+\wh{Z}_v)\cap \wh{\Xi}_y^v$ (cf.\ with \eqref{eq:splitting} and Figure \ref{fig:fig1}) consists of the single point $\wh{\xi}_v$; since both $\wh{\lambda}_u^v$ and $\wh{\xi}_v$ belong to the affine subspace $\wh{\lambda}_u^v+\wh{Z}_v$, in order to estimate the norm $|\wh{\lambda}_u^v-\wh{\xi}_v|$ it is sufficient to evaluate the norm $|\pi_{\wh{Z}_v}(\wh{\lambda}_u^v)-\pi_{\wh{Z}_v}(\wh{\xi}_v)|$ of the projections onto the linear space $\wh{Z}_v=(\ker(\dEndp{v})^*)^{\perp}$. The key point is the computation of the norm of $|\pi_{\wh{Z}_v}(\wh{\xi}_v)|$: in fact, since $\ker(\dEndp{v})^*=(\IM \dEndp{v})^{\perp}$, this amounts to evaluate
		\be\label{eq:Main}
			|\pi_{\wh{Z}_v}(\wh{\xi}_v)|=\sup_{f\in\IM \dEndp{v}}\frac{|\langle \wh{\xi}_v,f \rangle|}{|f|}.
		\ee
		We deduce immediately from \eqref{eq:Main} that, whenever $v$ is strictly abnormal, then $\pi_{\wh{Z}_v}(\wh{\xi}_v)=0$, while from the expression for the normal control \eqref{eq:NormControl}
		\be
			v_i(t)=\langle \wh{\xi}_v(t),X_i(x_v(t))\rangle=\langle \wh{\xi}_v,(\Flow{T,t}{v})_*X_i(x_v(t))\rangle,
		\ee
		we see that $\langle v,w\rangle_{L^2}=\langle\wh{\xi}_v,\dEndp{v}(w)\rangle$, and we can continue from \eqref{eq:Main} as follows ($W_v$ denotes the $k_O$-dimensional subspace of $L^2([0,T],\R^d)$ on which the restriction $\dEndp{v}\big|_{W_v}$ is invertible):
		\begin{align}\label{eq:Est}
			|\pi_{\wh{Z}_v}(\wh{\xi}_v)|&=\sup_{w\in W_v}\frac{|\langle\wh{\xi}_v,\dEndp{v}(w)\rangle|}{|\dEndp{v}(w)|}\\
										&\leq \sup_{w\in W_v}\frac{|\langle\wh{\xi}_v,\dEndp{v}(w)\rangle|}{\|w\|_{L^2}}\|(\dEndp{v}\big|_{W_v})^{-1}\|\\
										&=\sup_{w\in W_v}\frac{|\langle v,w\rangle|}{\|w\|_{L^2}}\|(\dEndp{v}\big|_{W_v})^{-1}\|\\
										&\leq \|v\|_{L^2}\|(\dEndp{v}\big|_{W_v})^{-1}\|.
		\end{align}
		
		It is  not restrictive to assume that the $L^2$-norm of any element $v\in\mc{V}_u\cap\mc{U}_y^{\min}$ remains bounded; moreover, since all the subspaces have the same dimension, the map $v\mapsto W_v$ is continuous, which implies that so is the map $v\mapsto (\dEndp{v}\big|_{W_v})^{-1}$. This, on the other hand, guarantees that the operator norm $\|(\dEndp{v}\big|_{W_v})^{-1}\|$ remains bounded for all $v\in\mc{V}_u\cap\mc{U}_y^{\min}$, and then from \eqref{eq:Est} we conclude that for some $C>1$, the estimate $|\pi_{\wh{Z}_v}(\wh{\xi}_v)|\leq C$ holds true, which implies as well, by the triangular inequality, that:
		\be\label{eq:est}
			|\wh{\lambda}_u^v-\wh{\xi}_v|\leq |\wh{\lambda}_u^v|+C.
		\ee
		
		Finally, the continuity of both the map $v\mapsto \Flow{0,T}{v}$ and its inverse, implies that for another real constant $C>1$ we have: 
		\be
			\sup_{v\in\mc{V}_u}\left\{\|(\Flow{0,T}{v})^*\|,\|(\Flow{T,0}{v})^*\|\right\}\leq C.
		\ee
		Thus, setting $\xi_v=(\Flow{0,T}{v})^*\wh{\xi}_v\in T^*_{x_0}M$ (cf.\  Figure \ref{fig:fig1}) we can compute (here $C$ denotes a constant that can change from line to line):
		\begin{align}
			|\lambda_u-\xi_v|&\leq C|\wh{\lambda}_u^v-\wh{\xi}_v|\\
							 &\leq C|\wh{\lambda}_u^v|+C^2\\
							 &\leq C^2\left(|\lambda_u|+1\right)\\
							 &\leq 2C^2\max\{|\lambda_u|,1\}.
		\end{align}Setting $K(\lambda_u):=2C^2\max\{|\lambda_u|,1\}$ the claim is proved.
	\end{proof}
	
	\begin{remark}
		Let us fix $\lambda_u\in\Xi_x^u\subset T^*_{x_0}M$ and consider the $k_O$-dimensional affine subspace
		\be
			(\Flow{0,T}{v})^*(\wh{\lambda}_u^v+\wh{Z}_v)=\lambda_u+(\Flow{0,T}{v})^*\wh{Z}_v,
		\ee
		with $\wh{Z}_v$ defined as in \eqref{eq:splitting}. Then if we call $Z_v:=(\Flow{0,T}{v})^*\wh{Z}_v\subset T^*_{x_0}M$,  the map
		\be\label{eq:contmap}
			v\mapsto \lambda_u+Z_v,\quad v\in\mc{V}_u\cap\mc{U}_y^{\min},\: y=\Endp{(v)}
		\ee
		is continuous; moreover, $Z_v$ is by construction transversal to $\Xi_y^v$, and $\xi_v\in (\lambda_u+Z_v)\cap \Xi_y^v$.
	\end{remark}
	
	Having in mind this remark, we deduce the following:
	
	\begin{cor}\label{cor:ball}
		Let $O\subset \Int{\Att}$ be an open set, and assume that
		\be
			\Class{(z)}\equiv k_O< m,\quad\textrm{for every }z\in O.
		\ee
		Let $x\in O$, $u\in\mc{U}_x^{\min}$, and consider $\mc{V}_u\subset \Omega_{x_0}^T$ as in Proposition \ref{prop:constant}. Then, for every $\lambda_u\in \Xi_x^u$, there exists a $k_O$-dimensional compact ball $A_u$, centered at $\lambda_u$ and transversal to $\Xi_x^u$, such that:
		\be
			A_u\cap\Xi_y^v\neq\emptyset\quad\textrm{for every }v\in\mc{V}_u\cap\mc{U}_y^{\min},\quad\textrm{where }y=\Endp{(v)}.
		\ee
	\end{cor}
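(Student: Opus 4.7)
The plan is to take $A_u$ as the closed ball of radius $R$, to be determined below, centered at $\lambda_u$ inside the $k_O$-dimensional affine subspace $\lambda_u + Z_u \subset T_{x_0}^*M$, where $Z_u = (\Flow{0,T}{u})^* \wh{Z}_u$ is the pullback of the complement introduced in the remark. By \eqref{eq:splitting}, $\wh{Z}_u$ is complementary to $\ker(\dEndp{u})^*$, hence $Z_u$ is complementary to the linear part of $\Xi_x^u$; this makes $\lambda_u + Z_u$ transversal to $\Xi_x^u$, yielding the required transversality of $A_u$.

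First, I would show that, after possibly shrinking $\mc{V}_u$, the affine subspace $\lambda_u + Z_u$ meets $\Xi_y^v$ in a single point $\mu_v$ for every $v \in \mc{V}_u \cap \mc{U}_y^{\min}$ with $y = \Endp{(v)}$. Since $\Class{(z)} \equiv k_O$ on $O$, the rank of $\dEndp{v}$ is constantly $k_O$ on the relevant controls, so the map $v \mapsto (\Flow{0,T}{v})^* \ker(\dEndp{v})^*$ is continuous into the Grassmannian of $(m-k_O)$-dimensional subspaces of $T_{x_0}^*M$. Transversality of $Z_u$ with this subspace at $v = u$ then persists on a whole neighborhood of $u$, and the complementarity of dimensions forces the intersection $(\lambda_u + Z_u) \cap \Xi_y^v$ to be a single point $\mu_v$.

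Next, I would estimate $|\mu_v - \lambda_u|$ uniformly in $v$. Let $\xi_v \in \Xi_y^v$ be the point supplied by Proposition \ref{prop:constant}, so that $|\xi_v - \lambda_u| \leq K(\lambda_u)$. Since both $\mu_v$ and $\xi_v$ lie in the affine subspace $\Xi_y^v$, their difference sits in the linear part $(\Flow{0,T}{v})^* \ker(\dEndp{v})^*$, while $\mu_v - \lambda_u \in Z_u$ by construction. Hence $\mu_v - \lambda_u$ is exactly the image of $\xi_v - \lambda_u$ under the projection $\pi_v$ of $T_{x_0}^*M$ onto $Z_u$ along $(\Flow{0,T}{v})^* \ker(\dEndp{v})^*$. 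By the first step the assignment $v \mapsto \pi_v$ is continuous, so after possibly shrinking $\mc{V}_u$ once more so that its closure is compact, the operator norms $\|\pi_v\|$ are uniformly bounded by some constant $C$. Setting $R := C K(\lambda_u)$, I obtain $|\mu_v - \lambda_u| \leq R$, so $\mu_v$ belongs to the closed ball $A_u$ of radius $R$ in $\lambda_u + Z_u$, and $A_u \cap \Xi_y^v \neq \emptyset$.

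The main obstacle, and the reason a naive continuity argument will not suffice, is obtaining the bound \emph{uniformly} over all $v \in \mc{V}_u \cap \mc{U}_y^{\min}$ as the endpoint $y$ varies in $O$. This is precisely where the standing assumption $\Class{(z)} \equiv k_O$ on $O$ is indispensable: it forces the $\dEndp{v}$ to have constant rank on the relevant set of controls, which in turn delivers the continuous (hence, on the precompact $\mc{V}_u$, uniformly bounded) dependence of the projections $\pi_v$ on $v$.
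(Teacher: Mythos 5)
Your proposal is correct and builds exactly the object the paper builds: a compact $k_O$-dimensional ball centered at $\lambda_u$ inside the fixed affine slice $\lambda_u+Z_u$, transversal to $\Xi_x^u$, whose non-empty intersection with each $\Xi_y^v$ comes from the uniform bound $K(\lambda_u)$ of Proposition \ref{prop:constant} together with the continuity (at constant rank $k_O$) of the pulled-back kernel subspaces. The difference lies only in how the radius is made uniform in $v$: the paper intersects a large $m$-dimensional ball $B_u^v$ with $\Xi_y^v$ to get a compact piece $T_v$, argues locally that $\lambda_u+Z_u$ stays transversal to $T_v$ (possibly enlarging the radius), and then extracts a finite subcover of the ``relatively compact'' $\mc{V}_u$ to take a maximal radius; you instead observe that, after shrinking $\mc{V}_u$ so that transversality of $Z_u$ with the linear part of $\Xi_y^v$ persists, the slice meets $\Xi_y^v$ in a single point $\mu_v$, and the identity $\mu_v-\lambda_u=\pi_v(\xi_v-\lambda_u)$ for the oblique projection $\pi_v$ along $(\Flow{0,T}{v})^*\ker(\dEndp{v})^*$ gives the explicit radius $R=C\,K(\lambda_u)$. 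Your route is more quantitative and dispenses with the covering argument; one caveat is that your appeal to ``shrinking $\mc{V}_u$ so that its closure is compact'' is not literally available for an $L^2$-neighborhood in the norm topology (the paper's ``assume $\mc{V}_u$ relatively compact'' has the same defect), but it is also unnecessary in your scheme: continuity at $u$ of $v\mapsto(\Flow{0,T}{v})^*\ker(\dEndp{v})^*$ along the constant-rank set already bounds $\|\pi_v\|$ uniformly on a sufficiently small $\mc{V}_u$, and shrinking $\mc{V}_u$ is harmless both for Proposition \ref{prop:constant} and for the subsequent use in Theorem \ref{thm:Reg}.
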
	
	\begin{center}
		\begin{figure}[ht!]
			\includegraphics[scale=.6]{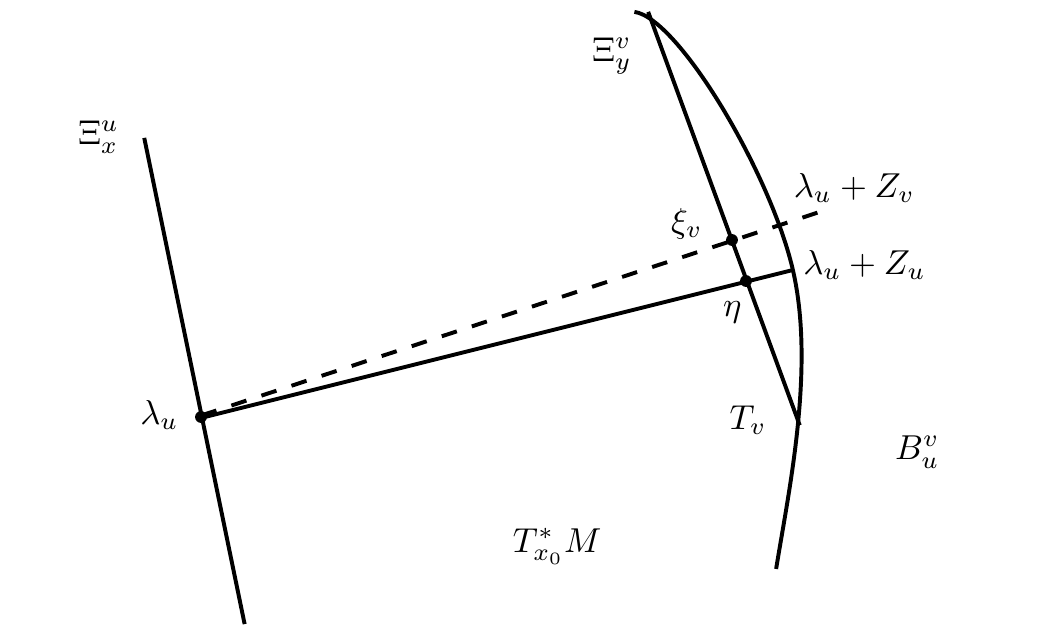}
			\caption{On the fiber $T^*_{x_0}M$, the point $\eta$ denotes the intersection between $T_v$ and the affine space $\lambda_u+Z_u$.}
			\label{fig:fig2}
		\end{figure}
	\end{center}	
	\begin{proof}
		Let $\lambda_u\in\Xi_x^u$ be chosen, and assume without loss of generality that $\mc{V}_u$ is relatively compact. For every $v\in\mc{V}_u$, we can construct an $m$-dimensional ball $B_u^v$, of radius $C_0^v$ strictly greater than $K=K(\lambda_u)$ (given by Proposition \ref{prop:constant}), and centered at $\lambda_u$.
		
		Then, the existence of an element $\xi_v\in\left(\lambda_u+Z_v\right)\cap \Xi_y^v$ satisfying $|\lambda_u-\xi_v|\leq K$, proved in Proposition \ref{prop:constant}, implies that the intersection of $B_u^v$ with $\Xi_y^v$ is a compact submanifold $T_v$ (with boundary); moreover, since the radius of $B_u^v$ is strictly greater than $|\lambda_u-\xi_v|$, it is also true that the intersection of $\lambda_u+Z_v$ with $\Int{T_v}$ is not empty.
		
		Let us consider as before (cf.\ previous remark) the $k_O$-dimensional affine subspace $\lambda_u+Z_u$, which is transversal to $\Xi_x^u$: possibly increasing the radius $C_0^v$, the  continuity of the map $w\mapsto \lambda_u+Z_w$ ensures that $\lambda_u+Z_u$ remains transversal to $T_v$, and in particular that the intersection $T_v\cap (\lambda_u+Z_u)$ is not empty (see Figure \ref{fig:fig2}). Moreover, it is clear that this conclusion is local, that is with the same choice of $C_0^v$ it can be drawn on some full neighborhood $\mc{W}_v$ of $v$. Then, to find a ball $B_u$ and a radius $C_0$ uniformly for the whole set $\mc{V}_u$, it is sufficient to extract a finite sub-cover $\mc{W}_{v_1},\dotso, \mc{W}_{v_l}$ of $\mc{V}_u$ , and choose $C_0$ as the maximum between $C_0^{v_1},\dotso, C_0^{v_l}$.
		
		We conclude the proof setting $A_u=B_u\cap (\lambda_u+Z_u)$; indeed $A_u$ is a compact $k_O$-dimensional ball by construction, and moreover if we call $\eta_v$ any element in the intersection $T_v\cap (\lambda_u+Z_u)$, for $v\in\mc{V}_u$, then it follows that:
		\be
			\eta_v\in \Xi_y^v\cap B_u\cap (\lambda_u+Z_u)=\Xi_y^v\cap A_u,
		\ee
		that is, the intersection $\Xi_y^v\cap A_u$ is not empty for every $v\in\mc{V}_u\cap\mc{U}_y^{\min}$.
	\end{proof}
	
	\begin{lemma}\label{lemma:neigh}
		Let $O\subset\Int{\Att}$ be an open set, and let 
		\be\label{eq:kO}
			k_O=\max_{x\in\Sigma_c\cap O}\Class{(x)}.
		\ee
		Then there exists a neighborhood $O'\subset O$, such that $\Class{(y)}=k_O$, for every $y\in O'$.
	\end{lemma}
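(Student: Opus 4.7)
The plan is to locate a point $x^*\in\Sigma_c\cap O$ realizing $\Class(x^*)=k_O$ (such a point exists since $\Sigma_c\cap O\neq\emptyset$ by Lemma~\ref{prop:DensityContinuity} applied to the open set $O\subset\Int{\Att}$, and $\Class$ takes only integer values in $\{0,1,\dotsc,m\}$) and to show that a suitable neighborhood of $x^*$ works as $O'$, by separately establishing the inequalities $\Class(y)\ge k_O$ and $\Class(y)\le k_O$ on neighborhoods of $x^*$. For the first inequality, given any sequence $y_n\to x^*$ in $O$, I pick $v_n\in\mathcal{U}_{y_n}^{\min}$, so that $\Rank{\dEndp{v_n}}=\Class(y_n)$. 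Since $x^*\in\Sigma_c$, Lemma~\ref{lemma:Contx} extracts a strongly $L^2$-convergent subsequence $v_{n_k}\to v$ with $v\in\mathcal{U}_{x^*}$, and \eqref{eq:Convergence} yields $\dEndp{v_{n_k}}\to\dEndp{v}$ in operator norm. Lower semi-continuity of rank then gives $\Rank{\dEndp{v}}\le\liminf_k\Class(y_{n_k})$, while $v\in\mathcal{U}_{x^*}$ forces $\Rank{\dEndp{v}}\ge\Class(x^*)=k_O$; running this on every subsequence and using the integer-valuedness of $\Class$, I conclude $\Class(y_n)\ge k_O$ for $n$ large, so $\Class(y)\ge k_O$ on a neighborhood $V\subset O$ of $x^*$.

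For the reverse inequality on a possibly smaller neighborhood, I argue by contradiction: assume a sequence $y_n\to x^*$ in $V$ satisfies $\Class(y_n)\ge k_O+1$. By the maximality of $k_O$ over $\Sigma_c\cap O$, none of the $y_n$ lies in $\Sigma_c$, so $\{y\in V:\Class(y)\ge k_O+1\}\subset V\setminus\Sigma_c$; since $\Sigma_c$ is residual in $V$ by Lemma~\ref{prop:DensityContinuity}, Baire's theorem in the open set $V\subset M$ forces this high-class set to have empty interior in $V$. To upgrade this to a genuine contradiction, the plan is to exploit the strong compactness of the family $\mathcal{U}_{y_n}^{\min}$ (Lemma~\ref{lemma:Compx} and Definition~\ref{defi:closedMinRank}) together with the strong $L^2$-convergence of optimal controls at continuity points (Lemma~\ref{lemma:Contx}), approximating each $y_n$ by a nearby continuity point $z_n$ whose optimal controls inherit the rank inequality $\Rank{\dEndp{\cdot}}\ge k_O+1$. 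This would produce a continuity point $z\in O$ with $\Class(z)\ge k_O+1$, contradicting the defining property of $k_O$.

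The main obstacle I anticipate lies squarely in the upper bound. The lower semi-continuity of $\Class$ at $x^*$ follows cleanly from the strong compactness afforded at continuity points together with the lower semi-continuity of rank. However, the set-valued map $y\mapsto \mathcal{U}_y^{\min}$ is not, in general, lower hemi-continuous, so an upper semi-continuity statement for $\Class$ cannot follow by a symmetric argument. Bridging the gap from \emph{``the high-class set has empty interior''} (a Baire-theoretic statement) to \emph{``the high-class set is empty on a neighborhood of $x^*$''} (a pointwise statement) requires combining the integer-valuedness of $\Class$, the residuality of $\Sigma_c$, and the extra structural information carried by the maximality of $k_O$; this constructive passage is the technical heart of the lemma.
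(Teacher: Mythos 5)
The part of your argument that you actually complete --- fixing $x^*\in\Sigma_c\cap O$ with $\Class{(x^*)}=k_O$ and excluding sequences $y_n\to x^*$ with $\Class{(y_n)}\le k_O-1$ by combining Lemma \ref{lemma:Contx}, the operator-norm convergence of differentials in \eqref{eq:Convergence}, and the lower semicontinuity of the rank --- is exactly the paper's proof, and in fact it is the \emph{whole} of the paper's proof: after the chain of inequalities \eqref{eq:ConvRank} the authors simply conclude. In other words, the paper only establishes $\Class{(y)}\ge k_O$ on a neighborhood $O'$ of $x^*$. The reverse inequality is immediate on $\Sigma_c\cap O'$, because $O'\subset O$ and $k_O$ is by definition the maximum of $\Class{}$ over $\Sigma_c\cap O$; but for points of $O'\setminus\Sigma_c$ the paper offers no argument (beyond what is implicit in Lemma \ref{lemma:ContNearReg}, namely that $\Class{(y)}=m$ would make $y$ tame, hence a continuity point, so only the intermediate values $k_O<\Class{(y)}<m$ are at stake).

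So, measured against the paper, you have reproduced everything the authors argue and correctly isolated the one step they do not address. Your proposed completion, however, does not work as sketched, for the reason you yourself give: Baire only yields that $\{y\in O'\mid \Class{(y)}\ge k_O+1\}$ has empty interior, and the bridge from there to emptiness would require transferring the rank bound from $y_n$ to nearby continuity points $z_n$; but Lemma \ref{lemma:Contx} gives compactness of optimal controls along sequences converging \emph{to} a continuity point, not any proximity of $\mc{U}_{z_n}^{\min}$ to $\mc{U}_{y_n}^{\min}$, and lower semicontinuity of the rank points in the wrong direction for an upper bound. Hence your proposal is incomplete as a proof of the stated equality; note, though, that the missing upper bound is not a detail one can discard, since the constancy of the class on all of $O$ (in particular at fair points, which need not be continuity points) is what Proposition \ref{prop:constant}, Corollary \ref{cor:ball} and the proof of Theorem \ref{thm:Reg} later rely on, so the difficulty you flag is a genuine issue with the lemma's proof as written rather than something you overlooked.
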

	
	\begin{proof}
		Let $x\in\Sigma_c\cap O$ be a point of continuity for the value function $\Cost{}$, having the property that $\Class{(x)}=k_O$. Assume by contradiction that we can find a sequence $\{x_n\}_{n\in\N}$ converging to $x$ and satisfying $\Class{(x_n)}\leq k_O-1$ for every $n\in\N$. Accordingly, let $u_n\in \mc{U}_{x_n}^{\min}$ an associated sequence of optimal controls; in particular, for every $n\in\N$, we have by definition that $\Class{(x_n)}=\Rank{\dEndp{u_n}}$.
		
		By Lemma \ref{lemma:Contx}, we can extract a subsequence $\{u_{n_k}\}_{k\in\N}\subset \{u_n\}_{n\in\N}$ which converges to some optimal control $u$ steering $x_0$ to $x$, strongly in the $L^2$-topology, and write:
		\be\label{eq:ConvRank}
			\Class{(x)}\leq \Rank{\dEndp{u}}\leq \liminf_{k\to\infty}\Rank{\dEndp{u_{n_k}}}=\liminf_{k\to\infty}\Class{(x_{n_k})}\leq k_O-1,
		\ee
		which is absurd by construction, and the claim follows.
	\end{proof}
	Collecting all the results we can now prove Theorem \ref{thm:Reg}.
	\begin{proof}[Proof of Theorem \ref{thm:Reg}] 
		Let $O$ be an open set in $\Int{\Att}$ and define
		\be
			k_O=\max_{x\in\Sigma_c\cap O}\Class{(x)};
		\ee
		notice that this definition makes sense, since points of continuity are dense in $\Int{\Att}$ by Proposition \ref{prop:DensityContinuity}. Then we may suppose that $k_O$ is strictly less than $m$, for otherwise there would be nothing to prove. Moreover, by Lemma \ref{lemma:neigh} it is not restrictive to assume that $\Class{(y)}=k_O$ for every $y\in O$. 
		
		Fix then a point $x\in\Sigma_c\cap O$; since the hypotheses of Proposition \ref{prop:constant} are satisfied, for every $u\in\mc{U}_x^{\min}$ we can find a neighborhood $\mc{V}_u\subset\Omega_{x_0}^T$ of $u$, fix $\lambda_u\in\Xi_x^u$, and construct accordingly a compact $k_O$-dimensional ball $A_u$, centered at $\lambda_u$ and transversal to $\Xi_x^u$, such that (Corollary \ref{cor:ball})
		\be
			A_u\cap\Xi_y^v\neq\emptyset\quad\textrm{for every }v\in\mc{V}_u,\:\,\textrm{and with }y=\Endp{(v)}.
		\ee
	
		Since $\mc{U}_x^{\min}$ is compact (Definition \ref{defi:closedMinRank}), we can choose finitely many elements $u_1,\dotso, u_l$ in $\mc{U}_x^{\min}$ such that 
		\be 
			\mc{U}_x^{\min}\subset \bigcup_{i=1}^l\mc{V}_{u_i}.
		\ee
		The union $A_{u_1}\cup\dotso\cup A_{u_l}$ is again of positive codimension. Moreover, for every sequence $x_n$ of fair points converging to $x$, and whose associated sequence of optimal controls (by uniqueness of the optimal control, necessarily $u_n\in\mc{U}_{x_n}^{\min}$) the sequence $u_n$ converges to some $v\in \mc{V}_{u_i}\subset \mc{U}_x^{\min}$, we have that $A_{u_i}$ is also transversal to $\Xi_{x_n}^{u_n}$. In particular, possibly enlarging the ball $A_{u_i}$, we can assume that 
		\be
			A_{u_i}\cap\Xi_{x_n}^{u_n}\neq \emptyset,\quad\textrm{for every }n\in\N.
		\ee
		For any fair point $z\in\Sigma_f\cap O$, the optimal control admits a normal lift, and we have the equality
		\be
			\mc{E}_{x_0}^T(\Xi_{z}^{u})=z,
		\ee
		where $\mc{E}_{x_0}^T$ is the exponential map with base point $x_0$ at time $T$ of Definition \ref{def:ExponentialMap}, so that we eventually deduce the inclusion:
		\be\label{eq:Sard}
			\Sigma_f\cap O\subset \mc{E}_{x_0}^T\left(A_{u_1}\cup\dotso\cup A_{u_l}\right).
		\ee
		The set on the right-hand side is closed, being the image of a compact set; moreover, it is of measure zero by the classical Sard Lemma \cite{Ste64}, as it is the image of a set of positive codimension by construction. Since the set $\Sigma_f\cap O$ is dense in $O$ by Corollary \ref{cor:DensityFair}, passing to the closures in \eqref{eq:Sard} we conclude that $\textrm{meas}(O)=0$, which is  impossible.
	\end{proof}
		
	Combining now Lemma \ref{lemma:ContNearReg} and Theorem \ref{thm:Reg} we obtain the following (cf.\ Theorem \ref{t:main2}).
	
	\begin{cor}\label{cor:ContOpenDense}
		The set $\Sigma_t$ of tame points is open and dense in $\Int{\Att}$.
	\end{cor}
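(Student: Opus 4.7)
The plan is very short: the statement is essentially the conjunction of two results that have already been established in the preceding pages, so the proposal is simply to cite and combine them. First, I would invoke Corollary \ref{cor:TameP} (which packages the second part of Lemma \ref{lemma:ContNearReg}) to get that $\Sigma_t$ is open in $\Int{\Att}$: indeed, if $x\in\Sigma_t$ then every optimal control reaching $x$ has surjective end-point differential, and an upper semicontinuity argument on the rank of $\dEndp{}$, combined with the $L^2$-compactness of optimal controls reaching nearby points (Lemma \ref{lemma:Compx} and Lemma \ref{lemma:Contx}), yields an open neighborhood $O_x\subset\Sigma_t$.

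For density, I would simply appeal to Theorem \ref{thm:Reg}, which is the technical core of Section \ref{s:tamet}. Let me recall its strategy, since that is where the real content lies: starting from an arbitrary open set $O\subset\Int{\Att}$, one defines $k_O=\max_{x\in\Sigma_c\cap O}\Class{(x)}$ and assumes, toward a contradiction, that $k_O<m$. By Lemma \ref{lemma:neigh} one shrinks $O$ so that $\Class{(y)}\equiv k_O$ throughout. Then for each $x\in\Sigma_c\cap O$, the affine covector sets $\Xi_x^u$ attached to optimal controls $u\in\mc{U}_x^{\min}$ have common dimension $m-k_O>0$, and the bound of Proposition \ref{prop:constant} together with Corollary \ref{cor:ball} produces, at every such $x$, finitely many transversal compact balls $A_{u_1},\dotso,A_{u_l}$ of dimension $k_O<m$ whose union contains (via the exponential map) all covectors associated with fair points in a neighborhood of $x$. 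The density of fair points (Corollary \ref{cor:DensityFair}), combined with Sard's lemma applied to $\mc{E}_{x_0}^T$ on a set of positive codimension, then forces $\mathrm{meas}(O)=0$, a contradiction.

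Putting these together, the proof of Corollary \ref{cor:ContOpenDense} reduces to one or two lines: openness is Corollary \ref{cor:TameP}, density is Theorem \ref{thm:Reg}, and an open dense set is exactly what the statement demands. No additional work is needed, and there is no real obstacle at this stage: all the delicate machinery (the covector transversality estimates of Proposition \ref{prop:constant} and Corollary \ref{cor:ball}, the constancy-of-class Lemma \ref{lemma:neigh}, and the Sard-type argument) has been absorbed into Theorem \ref{thm:Reg}. The only thing worth highlighting is that the conclusion also gives, via Lemma \ref{lemma:ContNearReg}(i), that $\Cost{}$ is continuous on this open dense set, which is the content of Theorem \ref{t:main2}.
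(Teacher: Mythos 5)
Your proposal is correct and coincides with the paper's own argument: the corollary is obtained by combining the openness of $\Sigma_t$ from Lemma \ref{lemma:ContNearReg} (equivalently Corollary \ref{cor:TameP}) with the density provided by Theorem \ref{thm:Reg}. The additional recollection of the proofs of those ingredients is not needed, but it does not affect the correctness of the two-line deduction.
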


	\section{On the smoothness}\label{sec:Consequences} % Qui ci scriviamo le conseguenze che il risultato della sezione prima comporta (Densità dei punti regolari della mappa esponenziale ecc.)
	\label{s:cons}

	In this section we deduce smoothness of the value function $\Cost{}$ in the presence of tame points. Since tame points are in particular points of continuity for $\Cost{}$, the arguments of Lemma \ref{lemma:Contx}, with minor changes, prove the following result.
	\begin{lemma}\label{lemma:compactoncompact}
		Let $K\subset \Sigma_t$ be a compact subset of tame points. Then the set of optimal controls reaching points of $K$
		\be\label{eq:OptSet}
			\mc{M}_K=\left\{u\in \Omega_{x_0}^T\mid\Endp{(u)}\in K\textrm{ and } \Costu{(u)}=\Cost{(\Endp{(u)})}\right\}
		\ee
		is strongly compact in the $L^2$-topology.
	\end{lemma}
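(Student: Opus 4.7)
The plan is to mimic the proof of Lemma \ref{lemma:Contx}, with the refinement that now we consider sequences whose endpoints are not forced to converge to a prescribed single point but merely to lie in the compact set $K$. Since any sequence $\{u_n\}_{n\in\N}\subset\mc{M}_K$ gives rise to a sequence of endpoints $x_n=\Endp{(u_n)}\in K$, by compactness of $K$ one can extract (up to subsequence) $x_n\to x\in K\subset \Sigma_t$. Because $\Sigma_t\subset \Sigma_c$ by Corollary \ref{cor:TameP}, $x$ is a point of continuity of $\Cost{}$, so we are exactly in the setting of Lemma \ref{lemma:Contx}.

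The first step is to establish a uniform $L^2$-bound on $\{u_n\}_{n\in\N}$. Since $\Cost{}$ is continuous on $\Sigma_t$ (Lemma \ref{lemma:ContNearReg}) and $K\subset \Sigma_t$ is compact, $\Cost{}$ is bounded on $K$ by some constant $C_K$. Combined with the upper bound $Q\leq M$ from (H3), the identity
\be
\|u_n\|_{L^2}^2=2\Costu{(u_n)}+\int_0^T Q(x_{u_n}(t))\,dt=2\Cost{(x_n)}+\int_0^T Q(x_{u_n}(t))\,dt\leq 2C_K+MT
\ee
yields the desired bound. Passing to a subsequence, we may therefore assume $u_{n_k}\rightharpoonup u$ weakly in $L^2([0,T],\R^d)$. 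By the weak continuity of the end-point map recalled in \eqref{eq:Convergence}, $u$ is admissible and $\Endp{(u)}=\lim_{k\to\infty}x_{n_k}=x\in K$. By (H2) the trajectories $x_{u_{n_k}}(\cdot)$ are uniformly bounded in $K_T$, so $Q(x_{u_{n_k}}(\cdot))\to Q(x_u(\cdot))$ uniformly on $[0,T]$.

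Exactly as in Lemma \ref{lemma:Contx}, combining the $L^2$ weak lower semicontinuity of the norm with the continuity of $\Cost{}$ at $x$ gives the chain
\begin{align}
\tfrac{1}{2}\|u\|_{L^2}^2-\tfrac{1}{2}\int_0^T Q(x_u(t))\,dt&\leq \liminf_{k\to\infty}\left(\tfrac{1}{2}\|u_{n_k}\|_{L^2}^2-\tfrac{1}{2}\int_0^T Q(x_{u_{n_k}}(t))\,dt\right)\\
&=\lim_{k\to\infty}\Cost{(x_{n_k})}=\Cost{(x)}\\
&\leq \Costu{(u)}=\tfrac{1}{2}\|u\|_{L^2}^2-\tfrac{1}{2}\int_0^T Q(x_u(t))\,dt.
\end{align}
Hence all inequalities are equalities: on one hand $\|u_{n_k}\|_{L^2}\to \|u\|_{L^2}$, which upgrades the weak convergence to strong $L^2$-convergence, and on the other hand $\Costu{(u)}=\Cost{(x)}$, so that $u$ is optimal for $x$. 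As $\Endp{(u)}=x\in K$, we conclude $u\in \mc{M}_K$, establishing the strong $L^2$-compactness.

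The only non-routine ingredient is the uniform $L^2$-bound on $\{u_n\}_{n\in\N}$; this is where continuity of $\Cost{}$ on the tame set (rather than mere lower semicontinuity) is genuinely used, together with (H3). Once that bound is in hand, everything else is a verbatim rerun of the lower-semicontinuity trick of Lemma \ref{lemma:Contx}, now applied with a moving endpoint constrained to the compact $K$.
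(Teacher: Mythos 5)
Your proposal is correct and follows essentially the same route as the paper, which simply reruns the argument of Lemma \ref{lemma:Contx} with the endpoint now ranging over the compact set $K$: extract $x_{n_k}\to x\in K\subset\Sigma_t\subset\Sigma_c$, get the uniform $L^2$-bound from boundedness of $\Cost{}$ on $K$ together with (H3), pass to a weak limit, and use continuity of $\Cost{}$ at $x$ plus weak lower semicontinuity of the norm to upgrade to strong convergence and optimality of the limit. Your explicit derivation of the uniform bound is precisely the ``minor change'' the paper alludes to (only note the harmless notational clash of using $M$ both for the manifold and for the upper bound on $Q$).
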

	
	The first result of this section, which is an adaptation of an argument of \cite{trelatrifford,agrachevsmooth}, is as follows:
	
	\begin{prop}\label{lemma:Lip}
		Let $K\subset \Sigma_t$ be a compact subset of tame points. Then $\Cost{}$ is Lipschitz continuous on $K$. 
	\end{prop}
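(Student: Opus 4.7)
The plan is to combine the compactness of $\mc{M}_K$ (Lemma \ref{lemma:compactoncompact}) with a uniform surjectivity argument for the end-point map at tame optimal controls, along the lines of \cite{trelatrifford,agrachevsmooth}. Fix a Riemannian metric on $M$ and denote by $|\cdot|$ the induced distance in a neighborhood of $K$.

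First, I would upgrade the pointwise tameness condition to a uniform \emph{local surjectivity} statement. For every $u\in\mc{M}_K$ the differential $\dEndp{u}\colon L^2([0,T],\R^d)\to T_{\Endp{(u)}}M$ is onto by definition of $\Sigma_t$, so the Graves/open mapping theorem applied to the smooth map $\Endp{}$ gives a neighborhood $\mc{V}_u\subset \Omega_{x_0}^T$ of $u$, a neighborhood $W_u$ of $\Endp{(u)}$ in $M$, and a constant $C_u>0$ such that for every $v\in\mc{V}_u$ and every $y\in W_{\Endp{(v)}}$ one can find $w\in L^2([0,T],\R^d)$ with $\Endp{(w)}=y$ and $\|w-v\|_{L^2}\leq C_u|y-\Endp{(v)}|$. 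By Lemma \ref{lemma:compactoncompact}, $\mc{M}_K$ is compact in $L^2$, so I extract a finite subcover $\mc{V}_{u_1},\dots,\mc{V}_{u_N}$ and obtain a single radius $\delta>0$ and a single constant $C>0$ for which the above holds uniformly for every $u\in\mc{M}_K$ and every $y$ with $|y-\Endp{(u)}|<\delta$.

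Next, since $\mc{M}_K$ is bounded in $L^2$ and $\Costu{}\colon \contr\to\R$ is smooth, its differential is bounded in operator norm on a sufficiently small bounded $L^2$-neighborhood of $\mc{M}_K$. Consequently there exists $L>0$ such that $\Costu{}$ is $L$-Lipschitz on this neighborhood. Now pick $x,y\in K$ with $|x-y|<\delta$ and choose $u\in\mc{M}_K$ optimal for $x$, so $\Costu{(u)}=\Cost(x)$. The uniform local surjectivity step produces $v\in L^2$ with $\Endp{(v)}=y$ and $\|v-u\|_{L^2}\leq C|y-x|$, whence
\be
\Cost(y)\leq \Costu{(v)} \leq \Costu{(u)}+L\|v-u\|_{L^2} \leq \Cost(x)+LC|y-x|.
\ee
Reversing the roles of $x$ and $y$ (which is legitimate since also $y\in K\subset\Sigma_t$ and one may start from an optimal control for $y$) yields the matching lower bound, so $|\Cost(y)-\Cost(x)|\leq LC|y-x|$ whenever $|x-y|<\delta$.

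To pass from this local estimate to Lipschitz continuity on the whole of $K$, I use that $\Cost$ is continuous on $\Sigma_t$ (Lemma \ref{lemma:ContNearReg}) and hence bounded on the compact set $K$, with finite oscillation $\Omega_K$. For $x,y\in K$ with $|x-y|\geq\delta$ one has $|\Cost(x)-\Cost(y)|\leq \Omega_K\leq (\Omega_K/\delta)|x-y|$, so combining the two regimes gives a single Lipschitz constant on $K$. The main obstacle in this plan is precisely the uniformization step: without the compactness of $\mc{M}_K$ provided by Lemma \ref{lemma:compactoncompact} the radii $\delta_u$ and the right-inverse bounds $C_u$ could degenerate along a minimizing sequence, destroying the uniform estimate on which the Lipschitz inequality rests.
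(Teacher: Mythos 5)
Your proposal is correct and follows essentially the same route as the paper: a quantitative local right inverse of the end-point map at each tame optimal control (the paper gets it from a smooth right inverse via the implicit function theorem, you from Graves' theorem), a Lipschitz estimate for $C_T$ on controls near $\mc{M}_K$, and the compactness of $\mc{M}_K$ from Lemma \ref{lemma:compactoncompact} to make all constants uniform before symmetrizing in $x$ and $y$. The only small imprecision is that boundedness of $\mc{M}_K$ in $L^2$ plus smoothness of $C_T$ does not by itself bound $d_vC_T$ in infinite dimensions; you should either invoke the compactness of $\mc{M}_K$ (finite cover by balls on which the differential is bounded) or argue explicitly as the paper does, using (H2) and \cite[Proposition 3.5]{Tre00}.
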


	\begin{proof}
		By compactness, it is sufficient to show that $\Cost{}$ is \emph{locally} Lipschitz continuous on $K$.
		
		Fix a point $x\in K$ and let $u$ be associated with an optimal trajectory joining $x_0$ and $x$. By assumption, $\dEndp{u}$ is surjective, so that there are neighborhoods $\mc{V}_u\subset \Omega_{x_0}^T$ of $u$ and $O_x\subset \Int{\Att}$ of $x$ such that \be
			\Endp{}\big|_{\mc{V}_u}:\mc{V}_u\to O_x
		\ee 
		is surjective, and there exists a smooth right inverse $\Phi:O_x\to\mc{V}_u$ such that $\Endp{(\Phi(y))}=y$ for every $y\in O_x$.
		
		Fix local coordinates around $x$, and let $B_x(r)\subset M$ and $\mc{B}_u(r)\subset \Omega_{x_0}^T$ denote some balls of radius $r>0$ centered at $x$ and $u$ respectively. As $\Phi$ is smooth, there exists $R>0$ and $C_0>0$ such that:
		\be\label{eq:implicit}
			B_x(C_0r)\subset \Endp{(\mc{B}_{u}(r))},\quad\textrm{for every } 0\leq r\leq R.
		\ee
	
		Observe that there also exists $C_1>0$ such that, for every $v,w\in\mc{B}_{u}(R)$ we have
		\be\label{eq:Costt}
			|C_T(v)-C_T(w)|\leq C_1\|v-w\|_{L^2}.
		\ee
		
		Indeed our main assumption implies that the subset $\{x_v(t)\mid t\in[0,T],\, v\in\mc{B}_u(R)\}$ is contained in a compact set $K$ of $M$, on which the smooth function $Q$, together with its differential $Q'$, attains both a maximum and a minimum. Then, using the mean value theorem and \cite[Proposition 3.5]{Tre00}, we deduce that 
		\be
			\int_0^T|Q(x_v(t))-Q(x_w(t))|dt\leq \sup_{y\in K}|Q'(y)|\int_0^T|x_v(t)-x_w(t)|dt\leq C\|v-w\|_{L^2},
		\ee
		and by means of the triangular inequality, \eqref{eq:Costt} is proved.
		
		Pick any point $y\in K$ such that $|y-x|= C_0r$, with $0\leq r\leq R$. Then by \eqref{eq:implicit} there exists $v\in\mc{B}_{u}(r)$ satisfying $\|u-v\|_{L^2}\leq r$ and such that $\Endp{(v)}=y$; since $\Costu{(u)}=\Cost{(x)}$ and $\Cost{(y)}\leq \Costu{(v)}$, we have
		\be
			\Cost{(y)}-\Cost{(x)}\leq\Costu{(v)}-\Costu{(u)}\leq C_1\|v-u\|_{L^2}\leq \frac{C_1}{C_0}|y-x|.
		\ee
		Using the compactness of both $K$ and $\mc{M}_K$ (cf.\ Lemma \ref{lemma:compactoncompact}), all the constants can be made uniform, and the role of $x$ and $y$ can be exchanged, so that we have indeed
		\be
			|\Cost{(x)}-\Cost{(y)}|\leq \frac{C_1}{C_0}|x-y|,
		\ee
		for every pair of points $x$ and $y$ such that $|x-y|\leq C_0R.$
	\end{proof}

	\begin{defi}\label{def:smooth}
		We define the set $\Sigma\subset\Int{\Att}$ of the \emph{smooth points} as the set of points $x$ such that 
		\begin{itemize}
			\item [(a)] there exists a unique optimal trajectory $t\mapsto x_u(t)$  steering $x_0$ to $x$ in time $T$, which is strictly normal, 
			\item [(b)] $x$ is not conjugate to $x_0$ along $x_u(\cdot)$ (cf. Definition \ref{def:ExponentialMap}).
		\end{itemize}
	\end{defi}
	Item (a) in the Definition \ref{def:smooth} is equivalent to require that $x$ is in fact a point that is at the same time fair and tame. Notice that as a consequence of the results of Section \ref{sec:RegularValues}, and in particular of Corollary \ref{cor:ContOpenDense}, the set $\Sigma_{f}\cap \Sigma_{t}$ is dense in $\Int{\Att}$.
	
\smallskip
	The following result finally proves Theorem \ref{t:main}.
	\begin{thm}[Density of smooth points]\label{thm:Smooth}
		$\Sigma$ is open and dense in $\Int{\Att}$. Moreover $\Cost{}$ is smooth on $\Sigma$.
	\end{thm}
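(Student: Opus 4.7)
The plan is to first establish density of $\Sigma$ via Rademacher's theorem applied to the Lipschitz value function on $\Sigma_t$, then use Sard to exclude conjugate points, and finally derive openness and smoothness from the inverse function theorem applied to the exponential map at regular covectors; the technical heart of the argument will be a covector-continuity step along sequences of optimal controls.

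For density, I would first note that Proposition \ref{lemma:Lip} and the openness of $\Sigma_t$ (Corollary \ref{cor:ContOpenDense}) together make $\Cost{}$ locally Lipschitz on $\Sigma_t$; Rademacher's theorem, applied in charts, then produces a full-measure (hence dense) set $D\subset \Sigma_t$ of differentiability points. By Corollary \ref{cor:differentiability} each $y\in D$ is fair, and combined with tameness its unique optimal control is strictly normal, giving a uniquely determined initial covector $\mu_y\in T^*_{x_0}M$ with $\mc{E}_{x_0}^T(\mu_y)=y$. Sard applied to the smooth exponential map $\mc{E}_{x_0}^T:T^*_{x_0}M\to M$ yields $\mathrm{meas}\,\mc{E}_{x_0}^T(\mathrm{Crit})=0$, so $D\setminus \mc{E}_{x_0}^T(\mathrm{Crit})$ remains of full measure in $\Sigma_t$; for any $y$ in this set the unique $\mu_y$ must be a regular point of $\mc{E}_{x_0}^T$, meaning $y$ is non-conjugate along its optimal trajectory, so $y\in\Sigma$. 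This shows $\Sigma$ is dense in $\Sigma_t$, hence in $\Int{\Att}$.

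For openness and smoothness, I would fix $x\in\Sigma$ with unique optimal initial covector $\mu_0$. Non-conjugacy means $d_{\mu_0}\mc{E}_{x_0}^T$ is invertible, so the inverse function theorem provides open neighborhoods $\mc{V}\ni \mu_0$ in $T^*_{x_0}M$ and $W_0\ni x$ in $M$ on which $\mc{E}_{x_0}^T|_{\mc{V}}:\mc{V}\to W_0$ is a diffeomorphism; shrinking, I may assume $W_0\subset \Sigma_t$. On $W_0$ I would introduce the smooth function $\Psi(y):=\Costu{(v_{\mu_y})}$, where $\mu_y := (\mc{E}_{x_0}^T|_{\mc{V}})^{-1}(y)$ and $v_{\mu_y}$ is the normal control associated with $\mu_y$ via \eqref{eq:NormControl}; $\Psi$ is smooth by smooth dependence of the Hamiltonian flow on initial data. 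The central claim is that for $y$ in a small enough open neighborhood $W\subset W_0$ of $x$, every optimal initial covector at $y$ lies in $\mc{V}$. Once granted, the injectivity of $\mc{E}_{x_0}^T|_{\mc{V}}$ makes $v_{\mu_y}$ the unique optimal control at $y\in W$, so $y$ is fair, tame, and non-conjugate, giving $W\subset\Sigma$; moreover $\Cost{}=\Psi$ on $W$, which proves the smoothness assertion.

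The hard part will be the claim of the previous paragraph. Arguing by contradiction, suppose there are $y_n\to x$ with optimal controls $v_n$ whose initial covectors $\mu_n$ lie outside $\mc{V}$. Since $x$ is a continuity point (Corollary \ref{cor:TameP}), Lemma \ref{lemma:Contx} yields, after extraction, $v_n\to u$ strongly in $L^2$, with $u$ the unique optimal at $x$. Tameness of $y_n$ forces $v_n$ to be strictly normal, and by Proposition \ref{prop:Lagrange} the final covector $\lambda^T_n$ is uniquely determined by $\lambda^T_n\circ \dEndp{v_n}=d_{v_n}\Costu{}$, since $\dEndp{v_n}$ is surjective. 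The operator-norm convergence $\dEndp{v_n}\to \dEndp{u}$ recalled in \eqref{eq:Convergence}, together with the convergence of $d_{v_n}\Costu{}$ and the surjectivity of $\dEndp{u}$, produces a uniformly bounded sequence of right-inverses for the $\dEndp{v_n}$, from which I would extract $\lambda^T_n\to \lambda^T$, the final covector at $x$. Continuity of the backward Hamiltonian flow then yields $\mu_n=(e^{T\ovr{H}})^{-1}(\lambda^T_n)\to \mu_0$, contradicting $\mu_n\notin \mc{V}$. The subtlety is extracting uniformly bounded right-inverses from the convergence in operator norm; this is precisely where the rank condition defining $\Sigma_t$ carries the argument.
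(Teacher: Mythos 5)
Your proposal is correct and follows essentially the same route as the paper: density via Lipschitz regularity on $\Sigma_t$, Rademacher, Corollary \ref{cor:differentiability} and Sard applied to $\mc{E}_{x_0}^T$; openness and smoothness via the local diffeomorphism of $\mc{E}_{x_0}^T$ at the non-conjugate covector, combined with the compactness of optimal controls (Lemma \ref{lemma:Contx}) and the surjectivity of $\dEndp{u}$ to force convergence of the optimal covectors of nearby points. Your single ``central claim'' merely repackages the paper's separate uniqueness (openness) and smoothness steps, and your use of the backward Hamiltonian flow in place of the paper's pulled-back covectors is an equivalent (indeed cleaner) formulation of the same limiting argument.
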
 
	
	\begin{proof}
		(i.a) Let us  show that $\Sigma$ is dense. First we prove that, for any open set $O$, we have $\Sigma\cap O\neq\emptyset$. Since the set $\Sigma_t$ of tame points
		%satisfying (a) (i.e. the set $\Sigma_{f}\cap \Sigma_{t}$)
		is open and dense in $\Int{\Att}$,  we can choose a subset
		%point $x\in\Sigma_{f}\cap \Sigma_{t}\cap O$ and
		$O'\subset O\cap \Sigma_{t}$ relatively compact, and assume by Proposition \ref{lemma:Lip} that $\Cost{}$ is Lipschitz on $O'$. Thanks to the classical Rademacher theorem we know that $\Cost{}$ is differentiable almost everywhere on $O'$, and therefore, since any point of differentiability is a fair point by Corollary \ref{cor:differentiability}, $\textrm{meas}(\Sigma_f\cap O')=\textrm{meas}(O')$. Moreover, any point in $\Sigma_f\cap O'$ is also contained in the image of the exponential map $\mc{E}_{x_0}^T$, and Sard Lemma implies that the set of regular points is of full measure in $\Sigma_f\cap O'$. By definition any such point is in $\Sigma$, that is we have $\textrm{meas}(\Sigma\cap O')=\textrm{meas}(\Sigma_f\cap O')=\textrm{meas}(O')$, which implies that $\Sigma\cap O'\neq\emptyset$, and this concludes the proof.
		
	(i.b) 	 Let us  prove that $\Sigma$ is open. Fix as before an open set $O$ having compact closure in $\Int{\Att}$. Assume by contradiction that there exists a sequence of points $x_n\in O$ converging to $x\in\Sigma$ and such that there are (at least) two optimal trajectories connecting them with $x_0$. Call $\{u_n\}_{n\in\mathbb N}$ and $\{v_n\}_{n\in\mathbb N}$ the corresponding sequences of optimal controls associated with such trajectories. Lemma \ref{lemma:Contx} then guarantees that, up to considering subsequences, it is not restrictive to assume the existence of both $u=\lim_{n\to\infty} u_n$ and $v=\lim_{n\to\infty}v_n$ in $L^2([0,T],\R^d)$; however, the uniqueness of the minimizer steering $x_0$ to $x$ implies that $u=v$. 
		
		Then both $\dEndp{u_n}$ and $\dEndp{v_n}$ have maximal rank for $n$ large enough ($u$ is strictly normal because $x$ is a smooth point), and we can define the families of covectors $\lambda_n$ and $\xi_n$, as elements of $T^*_{x_n}M$, satisfying the identities:
		\be
			\lambda_n\dEndp{u_n}=d_{u_n}\Costu{},\quad \xi_n \dEndp{v_n}=d_{v_n}\Costu{}.
		\ee
		Taking the limit on these two equations we see that $\lim_{n\to\infty}\lambda_n=\lim_{n\to\infty}\xi_n=\lambda$, where $\lambda$ is the covector associated with the unique optimal control $u$ steering $x_0$ to $x$. If, for any $s\in[0,T]$, we let $\lambda_n^s=(\Flow{s,T}{u_n})^*\lambda_n$ and $\xi_n^s=(\Flow{s,T}{v_n})^*\xi_n$, then we see that even the ``initial covectors'' $\lambda_n^0$ and $\xi_n^0$ converge to the same element $\lambda^0$.
		
		On the other hand, since by the point (b) of Definition \ref{def:smooth} $x$ is not conjugate to $x_0$ along the unique optimal trajectory $x_u(\cdot)$, we have that $\lambda^0$ is a regular point for the exponential map $\mc{E}_{x_0}^T$. Then there exist full neighborhoods $V\subset T^*_{x_0}M$ of $\lambda^0$ and $O_x\subset \Int{\Att}$ of $x$ such that the exponential map $\mc{E}_{x_0}^T\big|_V:V\to O_x$ is a diffeomorphism. In particular, if we pick some point $y\in O_x$, there is a unique optimal trajectory $x_u(\cdot)$ steering $x_0$ to $y$; moreover the covector $\lambda_y$ associated with $x_u(\cdot)$ is a regular point for $\mc{E}_{x_0}^T$, and from the equality $\Endp{(u)}=\mc{E}_{x_0}^T(\lambda_y)$, we see that $u$ has to be strictly normal. This shows that $O_x\subset \Sigma$, which in the end is an open set.

	(ii). Next we prove the smoothness of $\Cost{}$ on $\Sigma$. Let us consider a covector $\lambda\in T^*_{x_0}M$ associated with the unique optimal trajectory connecting $x_0$ and $x$. By the arguments of the previous point, there are neighborhoods $V_\lambda\subset T_{x_0}^*M$ of $\lambda$ and $O_x\subset\Int{\Att}$ of $x$ such that $\mc{E}_{x_0}^T\big|_{V_\lambda}:V_\lambda\to O_x$ is a diffeomorphism.
		
		It is then possible to define a smooth inverse $\Phi:O_x\to V_\lambda$ sending $y$ to the corresponding ``initial'' covector $\lambda_y$. Along (strictly normal) trajectories associated with covectors $\lambda_y$ in $V_\lambda$ we have therefore (compare with \eqref{eq:NormControl}):
		\be
			u^y_i(t)=\langle \Phi(y),X_i(x_u^y(t))\rangle,
		\ee 
		which means that the control $u^y\in\Omega_{x_0}^T$ and, in turn, the cost $\Costu{(u)}$ itself, are smooth on $O_x$.
	\end{proof}

	\appendix
	
	\section{A few technical results}
	\label{sec:appendix}
	We give here the proof of Lemmas \ref{prop:DensityContinuity} and \ref{lemma:ContNearReg}.

\begin{lemmi}
		The set $\Sigma_c$ is a residual subset of $\Int{\Att}$.
	\end{lemmi}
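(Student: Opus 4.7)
The plan is to exploit the classical fact that a lower semicontinuous real-valued function on any topological space has continuity set equal to the complement of a countable union of boundaries of closed sub-level sets, each of which is automatically nowhere dense.

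First, I recall that since $\Cost{}$ is lower semicontinuous (Proposition \ref{prop:Semicontinuous}), we have $\liminf_{y\to x}\Cost{(y)}\geq \Cost{(x)}$ for every $x\in\Int{\Att}$. Hence $\Cost{}$ is continuous at $x$ if and only if $\limsup_{y\to x}\Cost{(y)}=\Cost{(x)}$, equivalently, if and only if for every $q\in\Q$ with $\Cost{(x)}< q$ there exists a neighborhood $U$ of $x$ on which $\Cost{}\leq q$.

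Second, for each $q\in\Q$, set $F_q:=\{x\in\Int{\Att}\mid \Cost{(x)}\leq q\}$. Lower semicontinuity of $\Cost{}$ means that the strict super-level sets $\{\Cost{}>q\}$ are open, so each $F_q$ is closed in $\Int{\Att}$. By the previous characterization, if $x$ is a point of discontinuity of $\Cost{}$, one can pick a rational $q$ with $\Cost{(x)}<q<\limsup_{y\to x}\Cost{(y)}$; then $x\in F_q$ but every neighborhood of $x$ contains points outside $F_q$, so $x\in\partial F_q$. Thus the set of discontinuity points is contained in the countable union $\bigcup_{q\in\Q}\partial F_q$.

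Third, since $F_q$ is closed, $\partial F_q=F_q\setminus\mathrm{int}(F_q)$ is closed and has empty interior (any nonempty open set contained in $F_q$ would lie inside $\mathrm{int}(F_q)$), hence is nowhere dense in $\Int{\Att}$. Therefore the discontinuity set is meager, and $\Sigma_c=\Int{\Att}\setminus\bigcup_{q\in\Q}\partial F_q$ is residual. No real obstacle arises: the only subtle point is the purely topological argument that for a lower semicontinuous function, the one-sided discontinuity can always be separated from $\Cost{(x)}$ by a rational, and this requires no completeness or metric structure on $\Int{\Att}$; the residuality is a purely topological conclusion, although combining it with Baire's theorem (applicable since $\Int{\Att}$ is locally compact Hausdorff) additionally yields that $\Sigma_c$ is dense.
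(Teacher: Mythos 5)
Your proof is correct and follows essentially the same route as the paper: both arguments use the closed rational sub-level sets of the lower semicontinuous function $\Cost{}$ and show that every discontinuity point lies on the boundary of one of them, so that the discontinuity set is contained in a countable union of closed nowhere dense sets. Your added remark that residuality itself needs no Baire-type argument (Baire only upgrades it to density) is a fair refinement of the paper's phrasing, but the substance of the proof is the same.
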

	
	\begin{proof}
		We will show that the complement of $\Sigma_c$ is a meager set, i.e.\ it can be included into a countable union of closed, nowhere dense subsets of $\Int{\Att}$. Then the claim will follow from the classical Baire category theorem, that holds on smooth manifolds.
		
		Let then $x$ be a discontinuity point of $\Cost{}$. This implies that $\Cost{}$ is not upper semicontinuous at $x$, i.e.\ there exists $\varepsilon>0$ and a sequence $x_n\to x$ such that for all $n$
		\be\label{eq:Discontinuous}
			\Cost{(x)}+\varepsilon\leq \Cost{(x_n)}.  
		\ee
		For any $q\in\Q$ define the set
		\be
			K_q=\left\{x\in\Int{\Att}\mid \Cost{(x)}\leq q \right\};
		\ee
		the lower semicontinuity of $\Cost{}$ implies that $K_q$ is closed. Moreover, let us choose $r\in \Q$ such that $\Cost{(x)}< r<\Cost{(x)}+\varepsilon$; then by construction $x\in K_r\setminus \Int{K_r}$, which means that
		\be 
			\Int{\Att} \setminus \Sigma_c  \subset\bigcup_{r\in\Q}\left(K_r\setminus \Int{K_r}\right).
		\ee
	\end{proof}

\begin{lemmi}
	Let $x\in\Int{\Att}$ be a tame point. Then
	\begin{itemize}
		\item [(i)] $x$ is a point of continuity of $\Cost{}$;
		\item [(ii)] there exists a neighborhood $O_x$ of $x$ such that every $y\in O_x$ is a tame point. In particular, the restriction $\Cost{}\big|_{O_x}$ is a continuous map.
	\end{itemize}
\end{lemmi}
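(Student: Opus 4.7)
The plan is to prove (i) first, and then use it together with the earlier Lemma \ref{lemma:Contx} to deduce (ii).

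For (i), I would fix any optimal control $u$ steering $x_{0}$ to $x$. Since $x$ is tame, $\dEndp{u}$ is surjective onto $T_{x}M\cong\R^{m}$, i.e.\ $\Endp{}$ is a submersion at $u$. By choosing an $m$-dimensional subspace $W\subset L^{2}([0,T],\R^{d})$ on which $\dEndp{u}\big|_{W}$ is invertible and applying the classical implicit function theorem to the smooth finite-dimensional map $\Endp{}\circ(u+\cdot)\big|_{W}$, one obtains a neighborhood $O_{x}\subset\Int{\Att}$ of $x$ together with a smooth local section $\Phi\colon O_{x}\to\Omega_{x_{0}}^{T}$ with $\Phi(x)=u$ and $\Endp{(\Phi(y))}=y$ for every $y\in O_{x}$. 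If $y_{n}\to x$ in $O_{x}$, continuity of $\Phi$ gives $\Phi(y_{n})\to u$ strongly in $L^{2}$; since the cost functional is continuous on $\Omega_{x_{0}}^{T}$ in the strong $L^{2}$-topology (the quadratic part is trivially continuous and the potential term is continuous via the uniform dependence of trajectories on controls), one infers
\be
\Cost{(y_{n})}\leq \Costu{(\Phi(y_{n}))}\longrightarrow \Costu{(u)}=\Cost{(x)},
\ee
i.e.\ upper semicontinuity at $x$. Combined with Proposition \ref{prop:Semicontinuous}, this gives continuity of $\Cost{}$ at $x$.

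For (ii), I would argue by contradiction. Suppose there is a sequence $x_{n}\to x$ such that each $x_{n}$ is not tame; then one can pick for each $n$ an optimal control $u_{n}$ steering $x_{0}$ to $x_{n}$ with $\Rank{\dEndp{u_{n}}}\leq m-1$. Since $x$ is a continuity point of $\Cost{}$ by part (i), Lemma \ref{lemma:Contx} yields a subsequence $u_{n_{k}}$ converging strongly in $L^{2}$ to some optimal control $u$ for $x$. By \eqref{eq:Convergence} the differentials $\dEndp{u_{n_{k}}}$ converge to $\dEndp{u}$ in operator norm, and because the target $T_{x}M$ is finite dimensional, having full rank $m$ is an open condition in this topology: a bounded right inverse to $\dEndp{u}$ persists under small perturbations. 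Tameness of $x$ gives $\Rank{\dEndp{u}}=m$, so $\Rank{\dEndp{u_{n_{k}}}}=m$ for $k$ large, contradicting the choice of $u_{n}$. Hence there is a neighborhood $O_{x}$ of $x$ consisting of tame points, and applying (i) at every such point proves continuity of $\Cost{}$ on $O_{x}$.

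The main obstacle is the circular-looking interaction in (ii): to extract a strongly convergent subsequence of the would-be bad optimal controls via Lemma \ref{lemma:Contx}, one first needs $x$ to be a continuity point, which is precisely what (i) provides; once that is in place, the openness of the full-rank locus (which relies critically on finite-dimensionality of the target) closes the argument. The construction of the smooth local section $\Phi$ in part (i) by reducing the submersion $\Endp{}$ to a finite-dimensional complement of $\ker\dEndp{u}$ is the other technical point that must be carried out carefully.
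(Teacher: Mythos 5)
Your proposal is correct and follows essentially the same route as the paper: part (ii) is the paper's own argument (use part (i) and Lemma \ref{lemma:Contx} to get a strongly convergent subsequence of the alleged rank-deficient optimal controls, then pass to the limit using the operator-norm convergence \eqref{eq:Convergence} of the differentials, the paper phrasing the contradiction via normalized annihilating covectors rather than openness of the full-rank condition). In part (i) you use the same key mechanism—surjectivity of $\dEndp{u}$ at an optimal control, hence local openness/a smooth local section of $\Endp{}$—only packaged as upper semicontinuity combined with Proposition \ref{prop:Semicontinuous}, whereas the paper runs a slightly longer cluster-point argument; both versions are sound.
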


\begin{proof}
	
	To prove (i) we will show that, for every sequence $\{x_n\}_{n\in\N}$ converging to $x$, there holds $\lim_{n\to+\infty}\Cost{(x_n)}=\Cost{(x)}$; in particular we will prove the latter equality by showing that $\Cost{(x)}$ is the unique cluster point for all such sequences $\{\Cost{(x_n)}\}_{n\in\N}$.
	
	Let $u$ be any optimal control steering $x_0$ to $x$; by hypothesis $\dEndp{u}$ is surjective, and therefore $\Endp{}$ is locally open at $u$, which means that there exists a neighborhood $\mathcal{V}_u\subset \Omega_{x_0}^T$ of $u$ such that the image $\Endp{(\mathcal{V}_u)}$ covers a full neighborhood of $x$ in $\Int{\Att}$. This implies that, for $n$ large enough, the $L^2$-norms $\{\|u_n\|_{L^2}\}_{n\in\N}$ of optimal controls steering $x_0$ to $x_n$ remain uniformly bounded by some positive constant $C$.
	
	Let now $a$ be a cluster point for the sequence $\{\Cost{(x_n)}\}_{n\in\N}$. Then, it is not restrictive to assume that $\lim_{n\to\infty}\Cost{(x_n)}=a$. Moreover, our previous point implies that we can find a subsequence $\{x_{n_k}\}_{k\in\N}$, whose associated sequence of optimal controls $\{u_{n_k}\}_{k\in\N}$ weakly converge in $L^2([0,T],\R^d)$ to some admissible control $u$ steering $x_0$ to $x$, which in turn yields the inequality
	\be \Cost{(x)}\leq\Costu{(u)}\leq \liminf_{k\to\infty}\Costu{(u_{n_k})}=\liminf_{k\to\infty}\Cost{(x_{n_k})}=a.\ee
	
	Let us assume by contradiction that $\Cost{(x)}=b<a$, and let $\varepsilon>0$ be such that $b+\varepsilon<a$; moreover, let $v$ be an optimal control attaining that cost. By the tameness assumption, the end-point map $\Endp{}$ is open in a (strong) neighborhood $\mc{V}_v\subset \Omega_{x_0}^T$ of $v$, which means that all points $y$ sufficiently close to $x$ can be reached by admissible (but not necessarily optimal) trajectories, driven by controls $w\in\mc{V}_v$ satisfying $\Costu{(w)}\leq b+\varepsilon<a$. But this gives a contradiction since $\Cost{(x_{n_k})}$ must become arbitrarily close to $a$, as $k$ goes to infinity.
	
	To prove (ii), assume by contradiction that such a neighborhood $O_x$ does not exist; then we can find a sequence $\{x_n\}_{n\in\N}$ convergent to $x$, and such that for every $n\in\N$ there exists a choice of an abnormal optimal control $u_n$ steering $x_0$ to $x_n$, that is for every $n\in\N$ there exists a norm-one covector $\lambda_n$ such that:
	\be\label{eq:Abnormal}
	\lambda_n\dEndp{u_n}=0.
	\ee
	By Lemma \ref{lemma:Contx}, there exists a subsequence $u_{n_k}$ which converges strongly in $L^2([0,T],\R^d)$ to some optimal control $u$ reaching $x$; moreover, since we assumed $|\lambda_n|=1$ for all $n\in\N$, it is not restrictive to suppose that $\overline{\lambda}=\lim_{k\to\infty}\lambda_{n_k}$ exists. Thus, passing to the limit as $k$ tend to infinity in \eqref{eq:Abnormal}, we see that $u$ is forced to be abnormal, and thus we have a contradiction, as $x$ is tame.
	It follows then from  point (i) that $\Cost{}\big|_{O_x}$ is indeed a continuous map.	
	
\end{proof}
	
	\section*{Acknowledgments} The authors wish to thank A.A.\ Agrachev for bringing the problem to our attention, and for stimulating discussions.
	This research has been supported by the European Research Council, ERC StG 2009 ``GeCoMethods'', contract number 239748 and by the ANR project SRGI ``Sub-Riemannian Geometry and Interactions'', contract number ANR-15-CE40-0018. 
	
	\bibliography{Biblio}
\bibliographystyle{plain}	
\end{document}